\newtheorem{thm}{Theorem}
\newtheorem{lemma}[thm]{Lemma}
\newtheorem{cor}[thm]{Corollary}
\newtheorem{prop}[thm]{Proposition}
\theoremstyle{definition}
\newtheorem*{rmk}{Remark}
\newtheorem{ex}[thm]{Example}
\def\col{\colon\thinspace}
\newcommand{\R}{{\mathbb{R}}}
\newcommand{\Z}{{\mathbb{Z}}}
\newcommand{\N}{{\mathbb{N}}}
\newcommand{\E}{{\mathbb{E}}}
\newcommand{\vol}{\text{\rm vol}\,}
\newcommand{\abs}[1]{|#1|}
\newcommand{\set}[1]{\{#1\}}
\newcommand{\fol}{\mathcal{F}}
\newcommand{\lfol}{\tilde{\mathcal{F}}}
\newcommand{\mc}[1]{\mathcal{#1}}
\newcommand{\dx}{\mathrm{d}x}
\newcommand{\dy}{\mathrm{d}y}
\newcommand{\dz}{\mathrm{d}z}
\newcommand{\drm}{\mathrm{d}}
\newcommand{\xlif}{X_T}
\newcommand{\xdlif}{\accentset{\sim}{X}}
\newcommand{\ylif}{Y_T}
\newcommand{\alphtil}{\tilde{\alpha}}
\DeclareMathOperator{\intr}{\mathrm{Int}}
\DeclareMathOperator{\isom}{Isom}
\DeclareMathOperator{\rank}{rank}
\DeclareMathOperator{\sgn}{sgn}
\title{Geodesic and conformally Reeb vector fields on flat 3-manifolds}
\author{Tilman Becker}
\address{Mathematisches Institut, Universit\"at zu K\"oln, Weyertal 86-90, 50931 K\"oln, Germany}
\email{tibecker@math.uni-koeln.de}
\begin{document}

\begin{abstract}
A unit vector field on a Riemannian manifold $M$ is called geodesic if all of its integral curves are geodesics. We show, in the case of $M$ being a complete flat 3-manifold not equal to $\E^3$, that every such vector field is tangent to a 2-dimensional totally geodesic foliation.  Furthermore, it is shown that a geodesic vector field $X$ on a closed orientable complete flat 3-manifold is (up to rescaling) the Reeb vector field of a contact form if and only if there is a contact structure transverse to $X$ that is given as the orthogonal complement of some other geodesic vector field. An explicit description of the lifted contact structures (up to diffeomorphism) on the 3-torus is given in terms of the volume of $X$. Finally, similar results for non-closed flat 3-manifolds are discussed.
\end{abstract}
\maketitle
\section{Introduction}
Let $M$ be a manifold and $X$ a nowhere vanishing vector field on $M$. Then $X$ is called \emph{geodesible} if there exists a Riemannian metric $g$ on $M$ such that $X$ is of unit length and all of its integral curves are geodesics with respect to $g$. If the metric $g$ is fixed, $X$ is called \emph{geodesic}. Analogously, a one-dimensional foliation is called geodesible (resp.\ geodesic) if there is a geodesible (resp.\ geodesic) vector field spanning it. Note that, by definition, a geodesible foliation is always orientable. 

The class of geodesible foliations or vector fields is a large and interesting one. Examples include: 
\begin{itemize}
\item[(1)] Killing vector fields of unit length,
 \item[(2)] vector fields that admit a global closed (hyper-)surface of section,
 \item[(3)] Reeb vector fields of contact forms or stable Hamiltonian structures.
\end{itemize}
For a discussion of these examples and geodesible foliations in general, see \cite{gluck:1980}. Of particular interest are geodesic foliations of Riemannian manifolds of constant sectional curvature. The study of such foliations was initiated by Gluck and Warner \cite{gluckwarner:1983} in the 1980s, who described the possible ways the 3-sphere can be fibered by great circles. More than two decades later, Salvai \cite{salvai:2009} and Harrison \cite{harrison:2016,harrison:2021} have given similar characterizations of line fibrations of $\E^3$ (that is, $\R^3$ with the Euclidean metric). More generally, one may consider geodesic foliations of arbitrary flat $3$-manifolds. The following is the first result of the present paper.

\begin{thm}\label{thm:main1}
Let $M$ be a complete flat 3-manifold not equal to $\E^3$. Then any one-dimensional oriented geodesic foliation of $M$ is tangent to a two-dimensional totally geodesic foliation.
\end{thm}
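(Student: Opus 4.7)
The plan is to lift everything to the universal cover $\wt M = \R^3$ and reduce the question to producing a single distinguished unit vector. Write $M = \E^3 / \Gamma$ with $\Gamma \subset O(3) \ltimes \R^3$ and lift $X$ to a $\Gamma$-equivariant unit geodesic vector field $\tilde X$ on $\R^3$. Since $|\tilde X| = 1$ and the integral curves of $\tilde X$ are straight lines, the direction map $d := \tilde X \colon \R^3 \to S^2$ is constant along each flow line and satisfies $d(Ap + b) = A \cdot d(p)$ for every $(A, b) \in \Gamma$.

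In $\E^3$ every two-dimensional totally geodesic foliation is a family of parallel affine hyperplanes $\{\langle x, N\rangle = c\}$ determined by a unit normal $N \in S^2$. Such a foliation descends to $M$ precisely when $AN = \pm N$ for each $A$ in the linear holonomy $H \subset O(3)$, and it is tangent to $X$ precisely when $\langle \tilde X, N\rangle \equiv 0$. The theorem therefore reduces to producing a unit vector $N \in S^2$ satisfying both conditions.

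The central tool will be an orbit-closure analysis. Fix $p \in \R^3$ and set $v := \tilde X(p)$; then $\tilde X \equiv v$ on the line $\ell_p = p + \R v$, and by equivariance $\tilde X \equiv Av$ on each $\Gamma$-translate of $\ell_p$. Letting $\Gamma_0 \trianglelefteq \Gamma$ be the pure-translation subgroup, the closure in $\R^3$ of the $\Gamma_0$-orbit of $\ell_p$ falls into three cases according to the arithmetic of $v$ relative to $\Gamma_0$: (a) all of $\R^3$, in which case continuity forces $\tilde X \equiv v$ globally; (b) a countable union of parallel affine $2$-planes with common normal $N_v \perp v$, in which case any integral line of $\tilde X$ with nonzero $N_v$-component would necessarily cross a $2$-plane and inherit the constant value $v$, forcing its direction to equal $v$ and hence be perpendicular to $N_v$ -- so $\langle \tilde X, N_v \rangle \equiv 0$ throughout $\R^3$; or (c) a discrete family of parallel lines of direction $v$ (the rational-direction case). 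In case (c) I plan to iterate: if some other point of $M$ has irrational direction, apply (a) or (b) there; if every direction in the image $d(M)$ is rational, then $d$ takes values in a countable subset of $S^2$, and by continuity and connectedness of $M$ the image is a singleton, so $\tilde X$ is constant. Once $N$ has been produced, the condition $AN = \pm N$ will follow from $\Gamma$-equivariance together with uniqueness of $N$ up to sign, which holds as long as $\tilde X$ does not take values on a single $\pm$-pair in $S^2$.

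The main obstacle will be the interaction between this orbit-closure argument and the linear holonomy $H$, particularly in the presence of high-order rotational holonomies (such as the $\Z_3$- or $\Z_4$-screw quotients of $\E^3$), where the set of $\pm H$-invariant directions in $S^2$ is severely restricted. One must verify that the $N$ produced by the orbit-closure does lie in this restricted set, which may require separate case analysis using the Bieberbach classification of flat 3-manifolds. The non-compact flat 3-manifolds will be handled by a similar argument in which the reduced translation lattice (of rank less than $3$) is compensated for by the unbounded directions of $M$ itself, along which $\tilde X$ must still have straight-line integral curves.
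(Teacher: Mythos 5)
Your reduction of the theorem to producing a unit vector $N$ with $\langle \tilde{X},N\rangle\equiv 0$ and $AN=\pm N$ for all $A$ in the linear holonomy is correct, and for \emph{closed} $M$ --- where the translation subgroup $\Gamma_0$ is a rank-$3$ lattice $\Lambda$ by Bieberbach --- your orbit-closure trichotomy together with the crossing argument in case (b) does produce $N$ or force $\tilde{X}$ to be constant; that part is a clean, more elementary alternative to the paper's projection-and-cone argument. The genuine gap is that the theorem covers \emph{all} flat $3$-manifolds $\neq\E^3$, and your central tool evaporates exactly where the paper has to work hardest. If $\Gamma$ is generated by a screw motion whose rotation angle is an irrational multiple of $2\pi$, then $\Gamma_0$ is trivial, the $\Gamma_0$-orbit closure of $\ell_p$ is $\ell_p$ itself, and none of your cases (a), (b), (c) says anything; the closing sentence about compensating with ``the unbounded directions of $M$'' is not an argument. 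The paper devotes the bulk of its proof to precisely this case (Lemma \ref{lem:helplem}, followed by a further dichotomy in which the irrationality of the rotation is used to trap a fiber $\gamma^k(\ell)$ inside the region swept out by nearby fibers and force a transverse intersection of two fibers --- a contradiction). Even when $\Gamma_0$ is non-trivial but of rank $1$ or $2$, case (b) as you state it can fail: for $\Lambda=\Z v$ with $v$ not parallel to $u=\tilde{X}(p)$, the closure of $\R u+\Z v$ is a \emph{discrete} family of parallel lines inside a single affine plane, not a plane on which $\tilde{X}\equiv u$ densely, so a transversal fiber need not meet the set where you know the direction (two lines in $\R^3$ generically miss each other), and one is back to something like the paper's two-dimensional projection argument.

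The second issue is the equivariance step you defer. Your uniqueness argument for $AN=\pm N$ is fine whenever $\tilde{X}$ is non-constant, but when $\tilde{X}\equiv v$ is constant (which your cases (a) and (c) produce) you need a $\pm H$-invariant direction orthogonal to $v$, and --- as you suspect --- this can genuinely fail: for holonomy $\Z_3$, $\Z_4$ or $\Z_6$ acting by rotation about the axis $v$, no line in $v^{\perp}$ is preserved up to sign, so no foliation of $\E^3$ by parallel planes containing the direction $v$ descends to $M$. You flag this as ``the main obstacle'' but leave it unresolved, and it cannot be resolved by a cleverer choice of $N$; note that the paper itself dismisses the constant case with ``there is nothing to prove,'' i.e.\ it only ever verifies that the \emph{lifted} fibration of $\E^3$ is tangent to a fibration by affine planes and does not address descent there either. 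As written, your proposal establishes the statement only for closed $M$ with non-constant lift, and a complete proof must supply both the irrational-screw argument and a treatment (or explicit exclusion) of the constant case.
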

\begin{rmk}
(i) Note that we do not assume $M$ to be oriented or closed. Furthermore, the statement is false for geodesic foliations of $\E^3$. In fact, there exist fibrations of $\E^3$ by pairwise non-parallel oriented lines (so-called \emph{skew fibrations}), see \cite{harrison:2016}.

(ii) As pointed out by the referee, Theorem \ref{thm:main1} may also be proved using Theorem 1 (d) and a slight variation of Lemma 17 in \cite{harrison:2021}.

\end{rmk}

We will prove Theorem \ref{thm:main1} in Section \ref{section:geod} using the fact that every complete flat $3$-manifold that is not equal to $\E^3$ can be written as the quotient $\E^3 / \Gamma$, where $\Gamma$ is a subgroup of the isometry group of $\E^3$ that contains either a translation or a screw motion. We then look at the lifted geodesic foliation of $\E^3$ (i.e., a fibration by oriented lines), which must be invariant under the action of $\Gamma$. Using some elementary geometric arguments, we will see that this forces the line fibration to be tangent to a fibration by affine planes, which is equivalent to the statement in the theorem.  

There is also an interesting relation between geodesic foliations and contact structures. Recall that a contact structure on a $(2n+1)$-dimensional manifold $M$ is a maximally non-integrable hyperplane field $\xi \subset TM$. That is, if we write $\xi$ locally as the kernel of a 1-form $\alpha$, then the \emph{contact condition} $\alpha \wedge (\drm \alpha)^n \neq 0$ must hold everywhere. Any such $\alpha$ is called a \emph{contact form} defining the contact structure $\xi$. Now given a geodesic foliation, one can consider the hyperplane field given by the orthogonal complement of the corresponding fiber at each point. If this hyperplane field defines a contact structure, one says that the contact structure is \emph{induced} by the geodesic foliation. Perhaps the most basic example is the Hopf fibration of the standard 3-sphere, which is a great circle fibration inducing the standard contact structure. Gluck \cite{gluck:2018} has recently shown that any great circle fibration of the 3-sphere induces a contact structure, and that any such contact structure is diffeomorphic to the standard one. However, this does not hold in dimensions $\geq 5$, see \cite{gluckyang:2019}. The situation in the Euclidean case is a little more restrictive, as was shown by Harrison \cite{harrison:2019,harrison:2021}: A geodesic vector field $X$ spanning a line fibration of $\E^3$ induces a contact structure if and only if $\rank \nabla X \geq 1$, where $\nabla$ is the Levi-Civita connection. Similar to the case of $S^3$, it has also been shown that any of these contact structures is diffeomorphic to the standard one, see \cite{harrison:2019} and  \cite{beckergeiges:2021}.

To every contact form $\alpha$ there is associated a specific vector field, called the \emph{Reeb vector field} of $\alpha$ (denoted by $R_{\alpha}$). It is the unique vector field spanning the one-dimensional kernel of $\drm \alpha$, normalized so that $\alpha(R_{\alpha}) = 1$.
If a geodesic vector field $X$ induces a contact structure, there is always a corresponding contact form whose Reeb vector field is given by $X$, namely, $\alpha = i_X g$, where $g$ is the underlying Riemannian metric.

Now say we are given a Riemannian 3-manifold $M$ of constant sectional curvature equal to $1$. Then $M$ is the quotient of $S^3$ by the action of some finite subgroup \hbox{$\Gamma < \isom(S^3)$} of isometries. If $X$ is a geodesic vector field of $M$, we can lift it to a vector field on $S^3$ spanning a great circle fibration. By Gluck's result, the lifted vector field induces a contact structure. Since orthogonal complements are preserved under the action of $\Gamma$, this implies that $X$, too, induces a contact structure. In particular, $X$ is the Reeb vector field of some contact form. Similarly, using Harrison's result, a geodesic vector field $X$ on a flat 3-manifold induces a contact structure if and only if $\rank \nabla X \geq 1$, and $X$ is also Reeb in this case. However, consider for example the constant geodesic vector field $\partial_z$ on $\E^3$. This clearly does not induce a contact structure (the orthogonal complement being a constant plane field), but it is the Reeb vector field of a contact form, namely the standard one given by $\dz + x \, \dy$. That is, unlike in the case of positive constant curvature, the class of geodesic Reeb vector fields on flat 3-manifolds does not coincide with the class of geodesic vector fields that induce contact structures. 
The natural question then is:
\begin{center}
What is a (necessary and sufficient) criterion for a geodesic vector field on a flat $3$-manifold to be conformally Reeb?
\end{center}
Here, a vector field $X$ is said to be \emph{conformally Reeb} if there is a contact form $\alpha$ and a positive function $\lambda$ such that $X = \lambda \, R_{\alpha}$ (see also \cite{prasad:2022}).
This question is also motivated by Example (3) above: Every Reeb vector field is geodesible, but the converse is not true. Indeed, consider for example the manifold $S^2 \times S^1$ and the geodesible vector field $X = \partial_{\varphi}$, where $\varphi$ is the angular coordinate of the $S^1$-factor. This vector field cannot be Reeb (not even up to rescaling): If $\alpha$ were a contact form whose Reeb vector field is parallel to $X$, then $\drm \alpha$ would restrict to an exact area form on $S^2 \times \set{\text{point}}$, which is not possible due to Stokes' theorem. Generally, vector fields that admit closed global surfaces of section are always geodesible (Example (2) above) but never Reeb.
The question can then be seen as a special case of the more general question of whether or not a given geodesible vector field is conformally Reeb.
We give an answer to the above question for geodesic vector fields on closed orientable complete flat $3$-manifolds. Recall that by the classical Bieberbach theorems \cite{bieberbach:1911,bieberbach:1912} (see also \cite{wolf:1984}), any such $3$-manifold $M$ can be written, up to affine diffeomorphism (that is, a diffeomorphism preserving the Levi-Civita connection), as the quotient $M = T^3 / \Gamma$, where $\Gamma < \isom(T^3)$ is a finite subgroup of isometries acting freely on $T^3$, and $T^3$ is the standard flat 3-torus. Then, a geodesic vector field $X$ on $M$ can be lifted to a geodesic vector field $\xlif$ on $T^3$. We obtain the following result.

\begin{thm}\label{thm:main2}
Let $X$ be a geodesic vector field on a closed orientable complete flat $3$-manifold $M$. Then $X$ is conformally Reeb for a contact form $\alpha$ if and only if there is a geodesic vector field $Y$ on $M$ inducing a contact structure $\xi$ such that $X$ is everywhere transverse to $\xi$.

In this case, writing $M$ as $M =T^3/ \Gamma$ with $\Gamma < \isom(T^3)$, there is a fibration $\zeta \col T^3 \to S^1$ whose fibers are totally geodesic $2$-tori such that the lifted vector fields $\xlif$ and $\ylif$ are tangent to the fibers of $\zeta$. Furthermore, the lifted contact structures $\ker \, \alpha_T$ and $\xi_T$ on $T^3$ are both diffeomorphic to
\[
\ker \, \left(\sin\left(\frac{\vol_{X} \, \abs{\Gamma}}{A} \zeta\right) \mc{E}^1 + \cos\left(\frac{\vol_X \abs{\Gamma}}{A} \zeta\right) \mc{E}^2\right),
\]
where
\begin{itemize}
    \item $\mc{E}^1$ and $\mc{E}^2$ are 1-forms dual to a global orthonormal parallel frame $(E_1, E_2)$ spanning the fibers of $\zeta$,
    \item $A := \int_{\zeta^{-1}(a)} \mc{E}^1 \wedge \mc{E}^2$ is the (Euclidean) area of a typical fiber,
    \item $\vol_X$ is the \emph{volume} of $X$.
\end{itemize}
\end{thm}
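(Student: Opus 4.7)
The plan is to pass to the cover $T^3 \to M = T^3/\Gamma$, use Theorem~\ref{thm:main1} to reduce each geodesic vector field to an explicit normal form tangent to a common fibration $\zeta \colon T^3 \to S^1$, and then read off both the equivalence and the diffeomorphism type of the contact structures. The underlying rigidity is that a geodesic unit vector field on $T^3$ tangent to a totally geodesic $2$-foliation must be constant on each leaf --- a $2$-dimensional argument, since any two non-parallel lines in a Euclidean plane intersect; hence in an orthonormal frame $(E_1, E_2, \partial_w)$ adapted to the foliation, the vector field has the shape $\cos \theta(w)\, E_1 + \sin \theta(w)\, E_2$ for some smooth $\theta$.

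\textbf{Common fibration and shared tangency.} In the $\Leftarrow$ direction, apply Theorem~\ref{thm:main1} to $Y_T$; the contact condition for $\xi_T = Y_T^\perp$ reduces to $\theta'(w) \neq 0$, and this together with $T^3$-periodicity forces $\partial_w$ to be a rational direction (otherwise the shifts of $\theta$ would be dense, forcing $\theta$ constant, against the contact condition). Thus the foliation descends to a fibration $\zeta$ with totally geodesic $T^2$-fibers. In the $\Rightarrow$ direction, one instead applies Theorem~\ref{thm:main1} to $X_T$ and rules out the irrational case by a Stokes-type obstruction: a constant vector field on $T^3$ cannot be (conformally) Reeb, since any associated contact form has shape $dx + f\,dy + g\,dz$ with contact function $\partial_y g - \partial_z f$ of zero mean on a transverse $T^2$ by Stokes. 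Next, apply Theorem~\ref{thm:main1} to the other vector field to obtain its own totally geodesic $2$-foliation with normal $u$. The key rigidity step is that $u \parallel \partial_w$: since $\theta$ has nonzero winding, $Y_T$ sweeps through every direction in the $(E_1, E_2)$-plane as $w$ varies, so if $u$ and $\partial_w$ were independent then varying the two transverse coordinates independently would force $g(X_T, Y_T)$ to change sign somewhere, contradicting $X \pitchfork Y^\perp$. Writing $X_T = \cos \phi(w)\, E_1 + \sin \phi(w)\, E_2$, the transversality $\cos(\phi - \theta) > 0$ then forces $\phi$ and $\theta$ to share the same nonzero winding number $n$.

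\textbf{Contact form, dual geodesic, and explicit form.} For $\Leftarrow$, I look for $\alpha_T = F(w)\,dx + G(w)\,dy$ with $F' = -h \sin \phi$ and $G' = h \cos \phi$; the Reeb condition $i_{X_T} d\alpha_T = 0$ is then automatic, contact reduces to $h(w) \neq 0$, and periodicity of $F, G$ reduces to the two linear constraints $\int_0^1 h \sin \phi \, dw = \int_0^1 h \cos \phi \, dw = 0$, a codimension-$2$ affine condition compatible with positivity of $h$. Integration constants are chosen so that $\alpha_T(X_T) = F \cos \phi + G \sin \phi > 0$ everywhere, which is feasible because the linear ODE $\beta' = \phi' \gamma,\ \gamma' = h - \phi' \beta$ admits a positive periodic solution $\beta$ for appropriately chosen $h$ (the nonzero winding of $\phi$ ensures the monodromy is compatible). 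By construction the form is $\Gamma$-invariant and descends to a contact form $\alpha$ on $M$. For $\Rightarrow$, define $Y_T = \cos \theta(w)\, E_1 + \sin \theta(w)\, E_2$ where $\theta$ is obtained from $\phi$ by mollification plus a small strictly monotone correction, yielding $\theta' \neq 0$, $|\theta - \phi| < \pi/2$, and $\Gamma$-equivariance. Finally, both $\ker \alpha_T$ and $\xi_T$ are of the form $\ker(a(w)\,dx + b(w)\,dy)$ with $(a, b)$ a nowhere-zero angular path of winding $n$; reparametrizing $w$ by the inverse of the angle function of $(a, b)$ produces a diffeomorphism of $T^3$ rectifying both to the standard $\ker(\sin(2\pi n w)\,dx + \cos(2\pi n w)\,dy)$. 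The identity $2\pi n = \vol_X \abs{\Gamma}/A$ is verified by the explicit evaluation $\int_{T^3} \alpha_T \wedge d\alpha_T = 2\pi n \cdot A$, matched with the definition of $\vol_X$ on $M$ and the covering degree.

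The hard part will be the rigidity step forcing the two geodesic $2$-foliations to coincide: the transversality of $X$ and $Y^\perp$ is merely a pointwise $C^0$ condition, yet it must combine with the global winding of $\theta$ to produce the strong geometric alignment. The construction of a valid $\alpha_T$ (satisfying both contact and positivity) when $\phi'$ has zeros is also delicate, since the naive ansatz $\cos \phi\,dx + \sin \phi\,dy$ fails to be contact at critical points of $\phi$; this forces the appeal to the weighted antiderivative above and a careful monodromy analysis of the associated planar ODE.
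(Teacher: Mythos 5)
Your overall architecture matches the paper's: lift to $T^3$, use Theorem~\ref{thm:main1} to put each geodesic field in the normal form $\sin\theta(\zeta)E_1+\cos\theta(\zeta)E_2$ tangent to a common torus fibration, and reduce everything to conditions on the angle function; your rigidity step (the two foliations must coincide) and the volume computation are essentially the paper's arguments. But there are genuine gaps, precisely at the places you yourself flag as ``delicate''. In the ``only if'' direction you build $Y$ by taking its angle $\theta$ to be ``$\phi$ mollified plus a small strictly monotone correction''. This cannot work: a strictly increasing $\theta$ with $\abs{\theta-\phi}<\pi/2$ exists if and only if $\phi$ satisfies the oscillation bound $\phi(b)-\phi(a)>-\pi$ for all $a<b$ (for positive winding), and when $\phi$ drops by nearly $\pi$ on some interval the required $\theta$ must be nearly constant there --- no small correction of a mollification of $\phi$ achieves that. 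Worse, you never prove that being conformally Reeb forces this oscillation bound on $\phi$ in the first place; your Stokes obstruction only rules out the constant case. The paper's proof of this implication (Lemma~\ref{lem:mainLemma}, (ii)$\Rightarrow$(iii)) is a separate Stokes argument on two sub-cylinders $\Sigma_1,\Sigma_2$ of the torus (or dense cylinder) spanned by $E_3$ and $\xdlif$, whose boundary orientations force $\int \drm\tilde\alpha$ to take contradictory signs. Without this, the ``only if'' direction has no content; and once the bound is known, the monotone angle is produced by a staircase construction over the intervals $[\phi(a_k)-\pi/2,\,\phi(b_k)+\pi/2]$ between consecutive extrema, not by mollification.

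In the ``if'' direction, your ansatz $\alpha_T=F\,dx+G\,dy$ with $F'=-h\sin\phi$, $G'=h\cos\phi$ is reasonable, but the simultaneous existence of $h>0$ making $F,G$ periodic \emph{and} $\alpha_T(X_T)>0$ is exactly the hard point, and ``the nonzero winding of $\phi$ ensures the monodromy is compatible'' is not a proof: nonzero winding alone does not imply $X$ is conformally Reeb, so no argument using only that hypothesis can close this step. The paper resolves it by writing $(F,G)$ in polar form $\rho(\cos\varphi,\sin\varphi)$ with $\varphi$ monotone and $\abs{\varphi-\phi}<\pi/2$, solving the resulting ODE for $\rho$ explicitly, and killing the periodicity defect $I(\varphi)=\int_0^{2\pi}\tan(\varphi-\phi)\,\varphi'\,\drm t$ by an intermediate-value argument over the convex set of admissible $\varphi$; you need this (or an equivalent monodromy analysis you have not supplied). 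Finally, for the ``furthermore'' clause you treat $\ker\alpha_T$ as if it were of the form $\ker(a(w)\,dx+b(w)\,dy)$, but $\alpha$ is an \emph{arbitrary} contact form with $R_\alpha\sim X$; to compare its kernel with your constructed model you need the paper's Lemma~\ref{lem:samereeb} (contact forms with parallel Reeb fields on a closed $3$-manifold have diffeomorphic kernels, via an orientation argument plus Gray stability), which is absent from your proposal, as is the $\Gamma$-averaging needed to descend your constructed form to $M$.
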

See section \ref{section:volume} for the definition of the volume of a geodesible vector field. 

\begin{cor}\label{cor:std}
Let $X$ be a geodesic vector field on a closed flat 3-manifold. If $X$ is conformally Reeb for a contact form $\alpha$, then the lifted contact structure $\ker \alphtil$ on $\R^3$ is diffeomorphic to the standard contact structure $\ker \, (\drm z + x\, \drm y).$
\end{cor}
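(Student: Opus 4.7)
The plan is to apply Theorem~\ref{thm:main2} and then reduce the final identification of the lifted contact structure on $\R^3$ to the cited results of Harrison~\cite{harrison:2019} and Becker--Geiges~\cite{beckergeiges:2021} on contact structures induced by geodesic vector fields on $\E^3$. If $M$ is not orientable, I would first pass to the orientable double cover, which has the same universal cover $\R^3$ and to which both $X$ and $\alpha$ lift as a geodesic vector field and a contact form, respectively. This reduces the corollary to the orientable case, which is precisely the hypothesis of Theorem~\ref{thm:main2}.

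Assuming $M = T^3/\Gamma$, Theorem~\ref{thm:main2} furnishes a diffeomorphism of $T^3$ pulling back $\ker\alpha_T$ to $\ker(\sin(c\zeta)\,\mc{E}^1 + \cos(c\zeta)\,\mc{E}^2)$, where $c = \vol_X|\Gamma|/A > 0$. I would then lift everything to the universal cover $\R^3$: the totally geodesic $2$-torus fibers of $\zeta$ lift to a foliation of $\R^3$ by parallel affine planes, so the lifted function $\tilde\zeta$ is affine, and the lifts of $E_1, E_2$ yield a parallel orthonormal frame tangent to these planes. After a suitable Euclidean isometry of $\R^3$ together with a uniform rescaling (both of which preserve the contact structure up to diffeomorphism), I may arrange $\tilde\zeta = z$, $E_1 = \partial_x$, $E_2 = \partial_y$, so that $\ker\tilde\alpha$ becomes
\[
\ker\bigl(\sin z\,\drm x + \cos z\,\drm y\bigr)
\]
on $\R^3$.

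The key observation is that $\sin z\,\drm x + \cos z\,\drm y = i_Y g_{\mathrm{Eucl}}$ for the unit vector field $Y = \sin z\,\partial_x + \cos z\,\partial_y$ on $\E^3$, so the lifted contact structure is the Euclidean orthogonal complement of $Y$. The integral curves of $Y$ form a line fibration of $\E^3$ (by horizontal lines whose direction rotates with $z$), and a short computation yields $\rank\nabla Y = 1$. Hence $\ker\tilde\alpha$ is of exactly the type classified by the cited works of Harrison and Becker--Geiges, and their result identifies it, up to diffeomorphism, with the standard contact structure $\ker(\drm z + x\,\drm y)$.

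The main obstacle I expect is the bookkeeping in the middle step: one needs to check carefully that a single affine change of coordinates on $\R^3$ simultaneously places $\tilde\zeta$ onto the coordinate $z$ and aligns the lifted parallel frame $(E_1, E_2)$ with $(\partial_x, \partial_y)$, and that the constant $c$ can be absorbed into this change of coordinates without altering the diffeomorphism type of the resulting contact structure.
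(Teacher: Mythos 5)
Your proposal is correct, and it follows the paper's reduction up to the point where the lifted contact structure is normalized to $\ker\,(\sin (cz)\, \drm x + \cos (cz)\, \drm y)$; after that it diverges. The paper finishes self-containedly by exhibiting an explicit diffeomorphism of $\R^3$, namely $(x,y,z) \mapsto \bigl(z\sin(ny) - \tfrac{x}{n}\cos(ny),\, z\cos(ny) + \tfrac{x}{n}\sin(ny),\, y\bigr)$, which pulls $\sin(nz)\,\drm x + \cos(nz)\,\drm y$ back to $\drm z + x\,\drm y$. You instead observe that the normalized form is $i_Y g_{\mathrm{Eucl}}$ for the geodesic field $Y = \sin z\,\partial_x + \cos z\,\partial_y$, whose integral curves form a (one-parameter) line fibration of $\E^3$ with $\rank \nabla Y = 1$, and then invoke the cited classification of contact structures induced by line fibrations. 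This is legitimate: Harrison's criterion confirms the structure is contact, and the Becker--Geiges result (which, unlike Harrison's 2019 paper, covers non-skew fibrations) identifies it with the standard structure. What the paper's route buys is an elementary, fully explicit two-line verification; what yours buys is a conceptual explanation of \emph{why} the answer is standard, at the cost of importing a nontrivial external theorem. Two harmless inaccuracies: the constant $c = \vol_X|\Gamma|/A = 2\pi\deg\theta$ need not be positive (only nonzero), though your rescaling absorbs either sign; and the detour through the orientable double cover is unnecessary, since a closed $3$-manifold carrying a contact form is automatically oriented by $\alpha \wedge \drm\alpha$, so the non-orientable case is vacuous.
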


Theorem \ref{thm:main2}, and subsequently Corollary \ref{cor:std}, will be proven in section \ref{section:main} using the characterization in Theorem \ref{thm:main1}.

\begin{rmk}
Both the "if"- and the "only if"-part of the first statement of Theorem \ref{thm:main2} are false in general for non-closed manifolds, see section \ref{section:open}.
\end{rmk}

\subsection*{Acknowledgements} I want to thank my advisor Hansj\"org Geiges, as well as Murat Sa\u{g}lam, for many helpful discussions regarding this work. I would also like to thank the referee for the very detailed report, and for pointing out the alternative proof of Theorem \ref{thm:main1}.

This work is part of a project of the SFB/TRR 191 'Symplectic Structures in Geometry, Algebra and Dynamics', funded by the DFG (Projektnummer 281071066 -- TRR 191).

\section{Proof of Theorem \ref{thm:main1}}\label{section:geod}
Let $M$ be a complete flat 3-manifold not equal to $\E^3$. Then $M$ can be identified with $\E^3 / \Gamma$, where $\Gamma < \isom(\E^3)$ is a non-trivial discrete subgroup of isometries acting freely. Now let $\fol$ be a (one-dimensional, oriented) geodesic foliation of $M$, spanned by a unit vector field $X$. Then $\fol$ lifts to a geodesic foliation $\lfol$ of $\E^3$, spanned by the lifted vector field $\xdlif$. Here, we view $\lfol = \set{\ell}$ just as a set of lines. For a point $p \in \E^3$, denote by $\ell_p \in \lfol$ the fiber through $p$.  Note that $\lfol$ must be invariant under the action of $\Gamma$, that is, $\ell_{\gamma(p)} = \gamma(\ell_p)$ for every $\gamma \in \Gamma$ and $p \in \E^3$. Now it clearly suffices to prove the statement for the lifted foliation $\lfol$, since the covering map $\pi \col \E^3 \to M$ is locally isometric. That is, we have to show that the fibration $\lfol$ of $\E^3$ by oriented lines is tangent to a fibration by affine planes. A line fibration of this type is also called \emph{one-parameter}, cf. \cite{harrison:2021}. To do so, let us take a closer look at the group $\Gamma < \isom(\E^3)$. It is well known that every isometry of $\E^3$ (also called \emph{Euclidean motion}) is given by the composition of a reflection in a plane or rotation about some axis, and some (perhaps trivial) translation. Then one can easily see that any fixed-point free Euclidean motion must be one of the following three: 
\begin{itemize}
    \item a translation;
    \item a screw motion, i.e. rotation about some axis followed by translation in the direction of this axis;
    \item a glide reflection, i.e. reflection in some plane followed by translation parallel to this plane.
\end{itemize}
Note that applying a glide reflection twice yields a (pure) translation again. Hence, we may assume that the group $\Gamma$ contains a non-trivial translation or screw motion. We will treat these two cases separately. 

\textbf{\underline{First case ($\Gamma$ contains a translation)}:} Assume that there is some $T_v \in \Gamma$, where $T_v$ is the translation by some vector $v \in \R^3$. 
If $\xdlif$ is constant, there is nothing to prove. Otherwise, there is a point $p_0 \in \E^3$ such that $\ell_{p_0}$ does not point in the direction of $v$. Let $P \subset \E^3$ be the affine plane through $p_0$ spanned by $v$ and the cross-product $\xdlif(p_0) \times v$. Then $P$ is transverse to $\ell_{p_0}$, so we can consider the projection $\pi \col \E^3 \to P$ onto $P$ in the direction of $\ell_{p_0}$. Define a vector field $Y$ on $P$ by
\[
Y(p) := \drm \pi_p(\xdlif(p)),
\]
and denote by $\ell^Y_p$ the line in $P$ spanned by $Y(p)$. Note that $\ell^Y_p$ is just given by the projected line $\pi(\ell_p)$.
The $\Z$-action on $\E^3$ generated by the translation $T_v$ restricts to a $\Z$-action on $P$, and $Y$ is invariant under this action. Now partition $P$ as $P = A \sqcup B$, where $A = \set{Y \neq 0}$ and $B = \set{Y = 0}$. Note that $B$ is precisely the set of points $p \in P$ for which $\ell_p$ is parallel to $\ell_{p_0}$. Therefore, we may assume that $A \neq \emptyset$, for otherwise, $\xdlif$ is constant and therefore trivially one-parameter. Also, if $Y(p) \neq 0$ at some point $p \in P$, then $\ell^Y_p$ must be disjoint from $B$. Indeed, if there were a point $q \in B \cap \ell^Y_p$, then the fiber $\ell_p$ would intersect $\ell_q$ transversely, which is of course not possible.

Now we consider two cases. First, assume that there is a point $q \in A$ for which $Y(q)$ is parallel to $v$. Then $Y$ must be parallel to $Y(q)$ on the whole line $\ell^Y_{q}$. Indeed, if that were not the case, then the set of lines $\set{\ell^Y_p \col p \in \ell^Y_{q}}$ would fill out a cone that intersects $p_0 + \Z \, v$, see Figure \ref{fig:y}. In particular, there would be some line $\ell^Y_p$ intersecting a point in $B$, which is not possible, as we have seen above. For the same reason, $Y$ must be non-vanishing on $\ell^Y_{q}$ (in fact, we have that $Y(p) = Y(q)$ for all $p \in \ell^Y_q$). It follows that the plane spanned by $\ell^Y_{q}$ and $\xdlif(q)$ is fibered by pairwise parallel lines in $\mc{F}$. The same holds for every parallel translate of that plane, and we conclude that $\lfol$ is one-parameter. 

\begin{figure}[ht]

\labellist
\small\hair 2pt
\pinlabel $p_0$ [t] at 340 100
\pinlabel $v$ [b] at 363 103
\pinlabel $q$ [t] at 437 270
\pinlabel $Y(q)$ [b] at 475 275
\pinlabel $\ell_{q}^Y$ [t] at 600 270

\endlabellist
\centering
\includegraphics[scale=0.4]{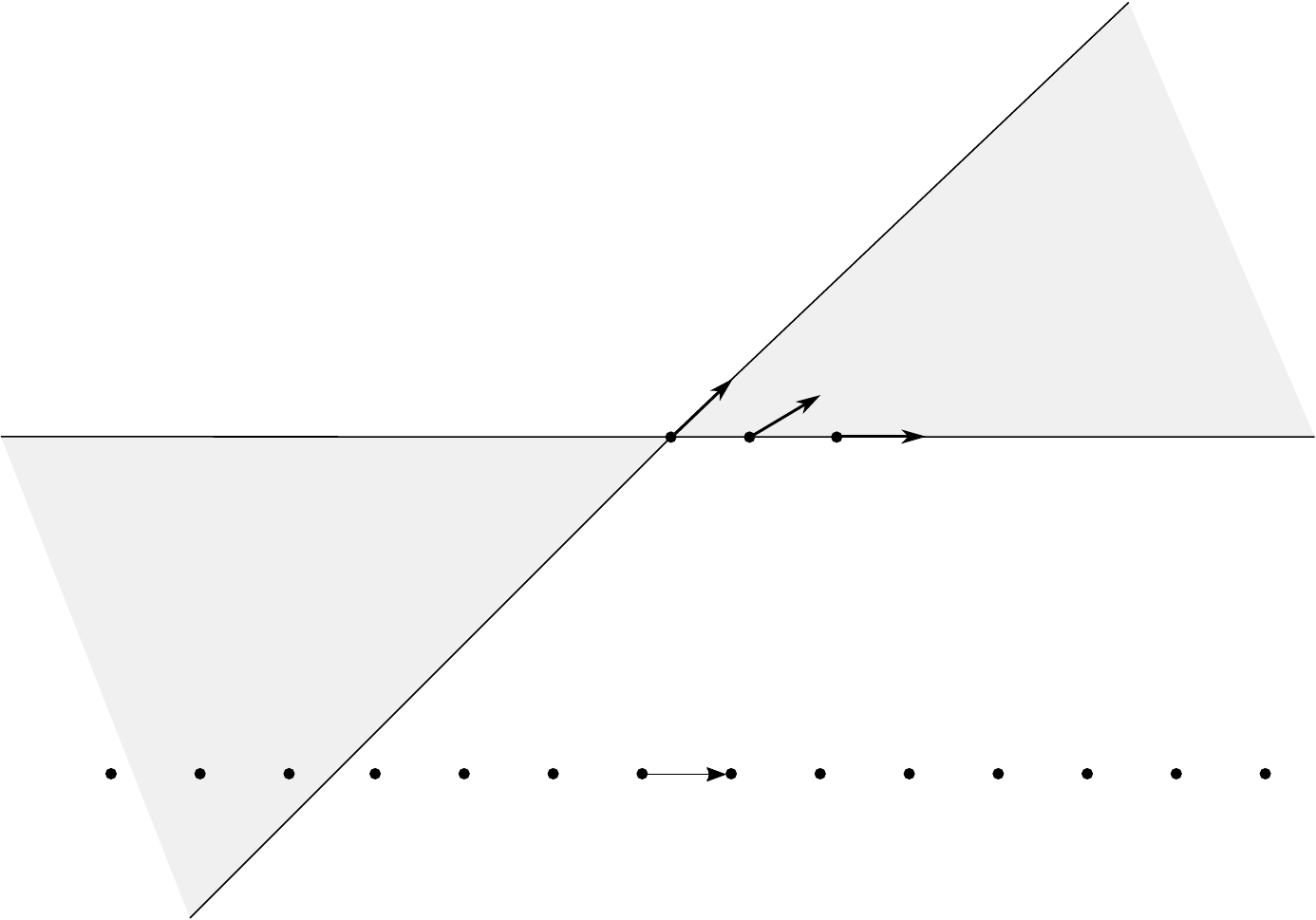}
\caption{The set of lines spanned by $Y$ contains the grey cylinder, which intersects the set of points $\set{p_0 + \Z \, v}$.}
\label{fig:y}
\end{figure}

Thus, we may assume that $Y$ is nowhere parallel to $v$. Let $Q \subset \E^3$ be the affine plane through $p_0$ spanned by $v$ and $\xdlif(p_0)$. Then $Q$ contains infinitely many fibers of $\mc{F}$ parallel to $\ell_{p_0}$, namely, the fibers over points in $p_0 + \Z \, v$. Note that these points correspond to points in $B \subset P$. If $Q^\prime$ is any other affine plane parallel to $Q$, then there must be fibers contained in $Q^\prime$ as well. To see this, denote by $U_Q$ and $U_{Q^\prime}$ the set of points in $Q$ and $Q^\prime$, respectively, where $\xdlif$ is transverse to $Q$ (resp. $Q^\prime$). Then, the flow of $\xdlif$ maps $U_{Q}$ diffeomorphically to $U_{Q^\prime}$. But since there is a $\Z$-family of fibers tangent to $Q$, we see that $U_{Q}$ is either empty or disconnected, so the same must be true for $U_{Q^\prime}$. In particular, $U_{Q^\prime} \neq Q^\prime$, so that there must be fibers in $\mc{F}$ tangent to $Q^\prime$. All of these fibers must be parallel to $\ell_{p_0}$, for otherwise $Y$ is parallel to $v$ (and non-zero) somewhere, and we are in the first case again. Furthermore, the translates of these fibers by integer multiples of $v$ are again fibers of $\mc{F}$ contained in $Q^\prime$. But then every disc of radius $> \abs{v}$ in $P$ must intersect $B$ in at least one point. Now by an argument similar to the one in the first case, we see that for every point $q \in A$ and $p \in \ell^Y_{q}$, we have that $Y(p) = Y(q)$, and we conclude that $\lfol$ is one-parameter. 

\textbf{\underline{Second case ($\Gamma$ contains a screw motion)}:} Assume that $\Gamma$ contains a screw motion $\gamma$, where $\gamma$ is given by some rotation followed by translation by some vector $v \in \R^3$. We may assume that the angle of rotation is an irrational multiple of $2\pi$, for otherwise, applying $\gamma$ some number of $k$ times yields a (pure) translation, and we are in the first case again.

Denote by $P$ the plane through the origin orthogonal to $v$, and for $t \in \R$ let \hbox{$P_t := P + tv$}, the parallel translate of $P$ by the vector $tv$. Consider the fiber $\ell_0$ through the origin, and let $\ell_t := \ell_{tv}$. We need the following additional lemma.
\begin{lemma}\label{lem:helplem}
Either $\ell_t \subset P_t$ for all $t$, or $\ell_0$ is parallel to $v$.
\end{lemma}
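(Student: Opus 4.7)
The plan is to work in coordinates where the rotation axis of $\gamma$ is the $z$-axis and $v=e_3$ (rescaling $t$ if necessary), so that $\gamma(x,y,z)=(R_\theta(x,y),z+1)$ with $R_\theta$ denoting rotation by the irrational angle $\theta$. Writing $w(t)\in S^2$ for the direction of $\ell_t$, the $\gamma$-equivariance gives $w(t+1)=R_\theta(w(t))$, and the vertical component $w_v(t):=w(t)\cdot v$ is $1$-periodic. If some $\ell_{t_0}$ coincides with the $z$-axis, then uniqueness of the fiber through a point forces $\ell_t=z$-axis for every $t$, so $\ell_0$ is parallel to $v$; I may therefore assume that no $\ell_t$ is vertical and aim to show $\ell_t\subset P_t$ for every $t$.

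The key tool is disjointness of distinct fibers. Solving the linear intersection condition $(0,0,t)+s\,w(t)=(0,0,t')+s'\,w(t')$ for $s,s'$ shows that whenever $t\neq t'$, either the horizontal projections $w_h(t)$ and $w_h(t')$ are not parallel, or $w(t)=\pm w(t')$. The proof then reduces to ruling out that $\ell_0$ is tilted (meaning both $w_h(0)\neq 0$ and $w_v(0)\neq 0$); once that is done, the same type of argument upgrades ``$\ell_0$ is horizontal'' to ``$\ell_t$ is horizontal for every $t$''.

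I would split the tilted case into two subcases. If no $\ell_t$ is horizontal, then $w_v$ has constant sign (say positive), and the constraint sharpens to $w_h(t)\parallel w_h(t')\Rightarrow w(t)=w(t')$, because $w(t)=-w(t')$ would flip the sign of $w_v$. This says that $w_h$ is determined by its horizontal direction; combined with $\gamma$-equivariance and density of the $R_\theta$-orbit of $w_h(0)$ in the circle of horizontal directions, one deduces that $w_v$ is in fact constant and $w_h$ surjects onto the circle $\{|u|=\sqrt{1-w_v(0)^2}\}$. But that circle contains antipodal pairs, producing $t_1,t_2$ with $w_h(t_1)=-w_h(t_2)$, contradicting the sharpened constraint.

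In the other subcase, some $\ell_{t^*}$ is horizontal; then by $\gamma$-invariance the directions $\{w_h(t^*+n)\}_{n\in\Z}$ form a dense $R_\theta$-orbit in the horizontal unit circle, and the disjointness constraint forces $[w_h(t)]\in\rp^1$ to avoid this dense set whenever $\ell_t$ is tilted. Continuity of $[w_h(\cdot)]$ into the totally disconnected complement of a dense subset of $\rp^1$ makes it locally constant on each connected component of the open tilted set $T=\{t:w_v(t)\neq 0\}$; approaching a boundary point $t^\prime\in\partial T$ (where $\ell_{t^\prime}$ is horizontal) then equates this constant with $[w_h(t^\prime)]$, contradicting $w_h(t)\not\parallel w_h(t^\prime)$ for tilted $t$ near $t^\prime$. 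Applying this same boundary argument with $t^\prime=0$, once $\ell_0$ is known to be horizontal, upgrades horizontality to all $t$. I expect the main obstacle to be the first subcase, which requires a global density/surjectivity argument (rather than a local boundary contradiction) together with a careful exploitation of the sign of $w_v$ to rule out the $w(t)=-w(t')$ branch of the constraint.
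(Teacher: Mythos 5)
Your argument is correct, but it takes a genuinely different route from the paper's. The paper argues geometrically: assuming some $\ell$ is transverse to $P$ and not parallel to $v$, it first disposes of the case where all projections $\pi(\ell_t)$ coincide (which would force a $\gamma$-invariant plane), and otherwise sweeps out a fan $N=\bigcup_{t\in[t_0,0]}\ell_t$ whose projection contains a cone of positive angle; irrationality of the rotation angle then produces a $k$ with $\pi(\gamma^k(\ell))$ in the interior of that cone, so $\gamma^k(\ell)$ meets some fiber of $N$ transversely. You instead encode everything in the direction function $w(t)$, extract the exact algebraic content of pairwise disjointness (for $t\neq t'$, either $\det\bigl(w_h(t),w_h(t')\bigr)\neq 0$ or $w(t)=\pm w(t')$), and combine it with density of the irrational rotation orbit via two soft topological facts: a connected subset of $S^1$ containing a dense set omits at most one point, and a continuous map into the complement of a dense subset of $\rp^1$ is locally constant. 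This buys you a cleaner case analysis with no transversality bookkeeping (the paper must choose $t_0$ so that every $\ell_t$, $t\in[t_0,0]$, stays transverse to $P$), at the cost of being less visual. One point to repair in your first subcase: the intermediate claim that $w_v$ is constant and $w_h$ surjects onto the circle $\bigl\{|u|=\sqrt{1-w_v(0)^2}\bigr\}$ is both stronger than needed and not justified as stated -- the constraint only determines $w(t)$ from the direction $[w_h(t)]$ when that direction is attained at two distinct parameters, which need not happen for every $t$. What you actually need, and what does follow from connectedness plus density of $\{R_\theta^n[w_h(0)]\}$, is merely that the image of $t\mapsto w_h(t)/|w_h(t)|$ misses at most one point of $S^1$, hence contains antipodal directions attained at some $t_1\neq t_2$; then $w(t_1)=\pm w(t_2)$ is forced, and either sign is immediately contradictory (one flips the sign of $w_v$, the other flips the horizontal direction). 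With that adjustment the proof is complete.
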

\begin{proof}
The statement is equivalent to saying that if $\ell_t$ is transverse to $P_t$ for some $t$, then $\ell_t$ is parallel to $v$. Therefore, for the sake of contradiction, let us assume that there is some $t \in \R$ such that $\ell := \ell_t$ is transverse to $P_t$ (and hence transverse to $P$) and not parallel to $v$. For simplicity assume that $t = 0$. Let $\pi \col \E^3 \to P$ be the orthogonal projection onto $P$. Then $\ell$ projects to a line $\pi(\ell) \subset P$. Now, consider the projected lines $\pi(\ell_t)$ for $t \in \R$. If $\pi(\ell_t) = \pi(\ell)$ for all $t \in \R$, then the lines $\ell_t$ must be pairwise parallel, thus the plane $Q$ spanned by $v$ and $\ell$ is fibered by (parallel) lines. Then $\gamma$ must preserve $Q$ in order for the fibration $\lfol$ to be preserved, which is only possible if $\gamma$ is trivial, a contradiction. Hence, we may assume that there is some $t_0 \in \R$ such that $\pi (\ell_{t_0}) \neq \pi(\ell)$. We may further assume (without loss of generality) that $t_0 < 0$ and that every $\ell_t$, for $t \in [t_0,0]$, intersects $P$ transversely (by choosing $t_0$ close enough to $0$). Now let 
\[
N := \bigcup_{t \in [t_0,0]} \ell_t \subset \E^3.
\]
Then the projection $\pi(N) \subset P$ contains the cone $K \subset P$ given by the convex hull of $\pi(\ell)$ and $\pi(\ell_{t_0})$ (see Figure \ref{fig:Lemma}). Let $\theta$ be the angle between $\pi(\ell)$ and $\pi(\ell_{t_0})$. Since the angle of rotation of $\gamma$ is irrational, there is some $k \in \N$ such that the projection of $\gamma^k(\ell) \in \mc{F}$ onto $P$ is a line obtained by rotating $-\pi(\ell)$ towards the interior of $K$ by an angle of less then $\theta$. In other words, $\pi(\gamma^k(\ell)) \subset \intr K$. From this we deduce that $\gamma^k(\ell)$ intersects $N$. However, since $k > 0 > t_0$, we see that $\gamma^k(\ell) \subsetneq N$, hence $\gamma^k(\ell)$ intersects some line in $N$ transversely, a contradiction.  
\begin{figure}[ht]
\labellist
\small\hair 2pt
\pinlabel $N$ [l] at 394 260
\pinlabel $\ell$ [l] at 360 233
\pinlabel $\ell_{t_0}$ [r] at 340 250
\pinlabel $\gamma^k(\ell)$ [l] at 210 228
\pinlabel $\R v$ [l] at 267 290
\pinlabel $\theta$ [l] at 283 113
\pinlabel $K$ [l] at 412 145
\pinlabel $P$ [l] at 493 92
\pinlabel $\pi(\ell)$ [l] at 368 113
\pinlabel $\pi(\ell_{t_0})$ [l] at 326 162
\pinlabel $\pi(\gamma^k(\ell))$ [l] at 68 113
\endlabellist
\centering 
\includegraphics[scale=0.6]{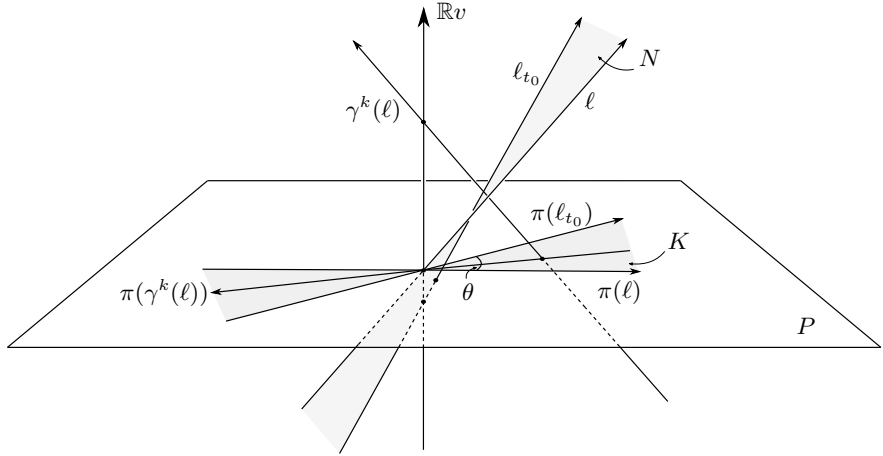}
\caption{The line $\gamma^k(\ell)$ intersects $N$ transversely.}
\label{fig:Lemma}
\end{figure}
\end{proof}
\begin{proof}[Proof of Theorem \ref{thm:main1} (cont.)]
Using Lemma \ref{lem:helplem}, we now have to consider two cases. The first is that $\ell_t \subset P_t$ for all $t \in \R$. Let us show that, under this assumption, every fiber of $\lfol$ must be contained in one of the planes $P_t$ (in particular, the fibration will be one-parameter). To see this, note first that each of the oriented lines $\ell_t$ divides $P_t$ into two open, oriented half-planes $\ell_t^+$ and $\ell_t^-$, where $\partial \ell_t^+ = \ell_t$ and $\partial \ell_t^- = -\ell_t$ (that is, $\ell_t$ with the opposite orientation). 
Here, the orientations of $\ell_t^+$ and $\ell_t^-$ come from a consistently chosen orientation of the $P_t$. Now assume that there is a point $p \in P$ such that $\ell_p$ intersects $P$ (and hence every $P_t$) transversely. We may assume that $\ell_p$ is not parallel to $v$, so that the orthogonal projection $\pi(\ell_p) \subset P$ of $\ell_p$ is a line again. Furthermore, we may assume that $\pi(\ell_p)$ intersects $\ell_0$ transversely (if that is not the case, simply replace $P$ by an appropriate $P_t$, and $\ell_0$ by $\ell_t$, for some $t \in \R$). Now, without loss of generality, let us assume that $p \in \ell_0^+$. Then the point $p_t$ given by the intersection of $\ell_p$ with $P_t$ must be contained in $\ell_t^+$ for every $t \in \R$, for otherwise, the line $\ell_p$ intersects one of the $\ell_t$ transversely. But since $\pi(\ell_p)$ intersects $\ell_0$ transversely, there is some $T > 0$ such that $\pi (p_t) \in \ell_0^-$ for all $t > T$. Again, since the angle of rotation of $\gamma$ is irrational, we can approximate $\ell_0$ arbitrarily well by $\pi(\gamma^k(\ell_0)) = \pi(\ell_k)$ for large enough $k \in \N$, hence we can approximate $\ell_0^-$ by $\pi(\gamma^k(\ell_0))^-$. In particular, there is some $k \geq T$ such that $\pi(p_k) \in \ell_0^- \cap \pi(\ell_k)^-$. But then $p_k \in \ell_k^-$, a contradiction.

The other case is that $\ell_0$ is parallel to $v$ (and then, in particular, $\ell_t = \ell_0$ for all $t$). We will show that in this case, every fiber must be parallel to $v$, and so the fibration is trivially one-parameter. Arguing again by contradiction, we assume that there are fibers not parallel to $v$. Choose a small closed disc $D \subset P$ such that
\begin{itemize}
    \item[(i)] for every $p \in D$, the fiber $\ell_p$ is transverse to $D$;
    \item[(ii)] for every $p \in \partial D$, the fiber $\ell_p$ is not parallel to $v$.
\end{itemize}
Such a disc can be found as follows. First, take a disc $D = D_r(0)$ (the closed disc about $0$ of radius $r > 0$) that satisfies (i). Now if (ii) does not hold, then there is some $p_0 \in \partial D$ such that $\ell_{p_0}$ is parallel to $v$. By applying $\gamma$ successively (once again using the fact that its rotational angle is irrational) we find that for a dense subset of $\partial D$, the corresponding fibers must be parallel to $v$. Then by continuity, this must hold for every fiber through points in $\partial D$. But then the set of all fibers through $\partial D$ form a straight cylinder parallel to $v$, and thus every fiber inside that cylinder must be parallel to $v$ as well. In other words, the fibration is constant over $D$. But since the fibration is assumed to be globally non-constant, we find a larger disc, again called $D$, so that (i) is still satisfied and the fibration is not constant over $D$. Then $D$ has to satisfy (ii) as well.

Now let $\Sigma := \{\ell_p \col p \in \partial D\}$ be the surface consisting of all fibers through points in $\partial D$. Let $\Sigma_t := \Sigma \cap P_t$, with $P_t = P + tv$ as before, and let $\pi(\Sigma_t)$ be its projection to $P$. 
We shall prove that there is some $T>0$ such that for all $t \in \R$ with $t > T$ or $t < -T$ we have that
\begin{equation}\label{eq:sigma}
D \subset \intr \pi(\Sigma_t),
\end{equation}
where $\intr \pi(\Sigma_t)$ denotes the interior of $\pi(\Sigma_t)$, that is, the connected component of $P_t \setminus \pi(\Sigma_t)$ bounded by $\pi(\Sigma_t)$ with compact closure.
Indeed, $\pi(\Sigma_t)$ is obtained from $\pi(\Sigma_0) = \partial D$ by flowing in the direction of the projected lines $\pi(\ell_p), \, p \in \partial D$. Denote this flow by $\Phi_{\tau}$. For $T$ large enough and $t > T$ or $t < -T$, the set $\Phi_{t}(\partial D)$ lies outside of $D$, that is, $\Phi_t(\partial D) \subset P \setminus D$. The fact that we can write $D$ instead of $\intr D$ here is because the $\ell_p$ project to lines and not points, due to property (ii) above; hence no point on $\partial D$ is fixed under the flow $\Phi$. Since none of the projected lines point to the origin (due to Lemma \ref{lem:helplem}), the origin stays in the interior while applying the flow, from which (\ref{eq:sigma}) follows. This is illustrated in Figure \ref{fig:circle}. 

\begin{figure}[ht]
\centering 
\labellist
\small\hair 2pt
\pinlabel $D$ [l] at 180 227
\pinlabel $\pi(\Sigma_t)$ [l] at 220 180
\endlabellist
\centering
\includegraphics[scale=0.55]{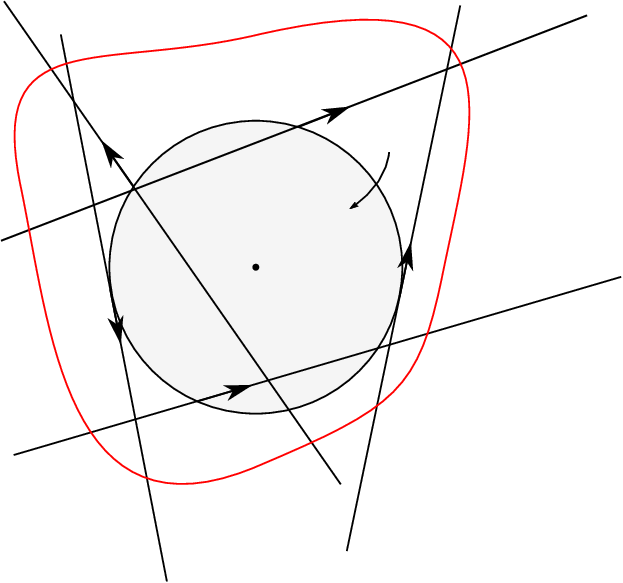}
\caption{$D$ is contained in the interior of $\pi(\Sigma_t)$.}
\label{fig:circle}
\end{figure}

Now let $k > T$ and consider the surface $\tilde{\Sigma} := \gamma^k(\Sigma)$. Then, since $\lfol$ is invariant under the action of $\Gamma$, we see that $\tilde{\Sigma}$, too, is a union of fibers of $\lfol$. Hence, either $\Sigma$ and $\tilde{\Sigma}$ are disjoint, or they intersect in a set of common fibers. In particular, the intersection $\Sigma \cap \tilde{\Sigma}$ is either empty or there is a non-empty intersection in every $t$-level, that is, $\Sigma_t \cap \tilde{\Sigma}_t \neq \emptyset$ for every $t$. On the other hand, from (\ref{eq:sigma}) we deduce that $\pi(\tilde{\Sigma}_k) = \partial D \subset \intr \pi(\Sigma_k)$, hence $\tilde{\Sigma}_k \subset \intr \Sigma_k$. Similarly, one shows that $\Sigma_0 \subset \tilde{\Sigma}_0$, see Figure \ref{fig:hyperboloid} below. But this means that $\Sigma \cap \tilde{\Sigma} \neq \emptyset$ while $\Sigma_k \cap \tilde{\Sigma}_k = \emptyset$, a contradiction. \qedhere

\begin{figure}[ht]
    \labellist
    \small\hair 2pt
    \pinlabel $\Sigma$ [bl] at 248 75
    \pinlabel $\tilde{\Sigma}$ [bl] at 248 230
    \pinlabel $P_k$ [r] at 340 175
    \pinlabel $P_0$ [r] at 340 105
    \endlabellist
    \centering
    \includegraphics[scale=0.75]{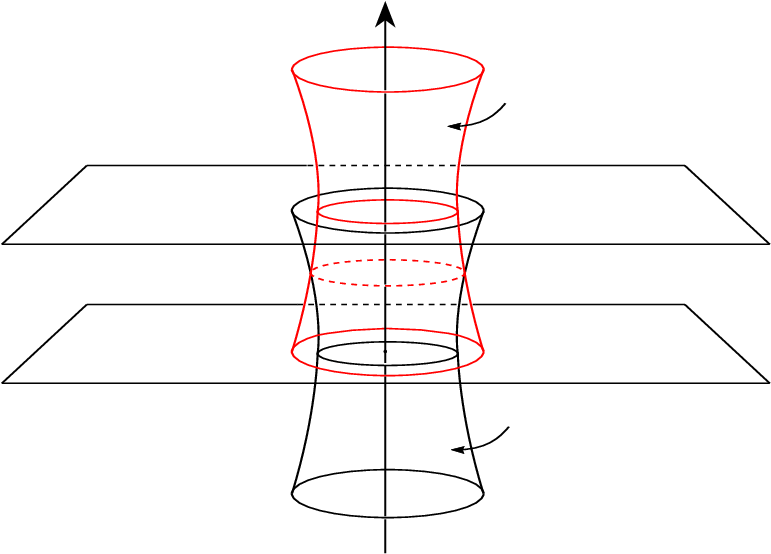}
    \caption{The surfaces $\Sigma$ and $\tilde{\Sigma}$ intersect transversely.}
    \label{fig:hyperboloid}
\end{figure}

\end{proof}

\section{The volume of a geodesible vector field}\label{section:volume}
In this section, we introduce the notion of volume for a geodesible vector field, following \cite{geiges:vector}. Let $X$ be a non-vanishing vector field on a manifold $M$. By a result of Wadsley \cite{wadsley:1975}, $X$ is geodesible if and only if there exists a 1-form $\alpha$ such that $\alpha(X) = 1$ and $i_X\drm \alpha = 0$. This 1-form is called \emph{connection form} (sometimes also \emph{characteristic form}) for $X$. Note that, in particular, this characterization implies the geodesibility of Reeb vector fields. Now, assume that $M$ is closed and $\dim M = 3$. Define the \emph{volume} of $X$ as
\[
\vol_X := \int_M \alpha \wedge \drm \alpha.
\]
This does not depend on the specific choice of the connection form $\alpha$, as follows from the following identity for arbitrary 1-forms $\alpha$ and $\beta$:
\begin{equation}\label{eq:oneforms}
\alpha \wedge \drm \alpha - \beta \wedge \drm \beta = (\alpha - \beta) \wedge (\drm \alpha + \drm \beta) + \drm (\alpha \wedge \beta).
\end{equation}
Namely, if $\alpha$ and $\beta$ are connection 1-forms for $X$, then $(\alpha-\beta)\wedge (\drm \alpha + \drm \beta) = 0$, thus, (\ref{eq:oneforms}) implies that
\[
\int_M \alpha \wedge \drm \alpha - \int_M \beta \wedge \drm \beta = \int_M \drm (\alpha \wedge \beta) = 0,
\]
where the last equality follows from the assumption of $M$ being closed. Similarly, one can define the volume of geodesible vector fields on higher-dimensional manifolds. I refer the reader to \cite{geiges:vector} for more details, also regarding the computation of $\vol_X$.

Now, let $p \col M \to N$ be a $k$-fold covering and $X$ a geodesible vector field on $N$. Let $\alpha$ be a connection form for $X$. Then the 1-form $p^* \alpha$ is clearly a connection form for the lifted vector field $Y$ (in particular, $Y$ is geodesible). The volumes of $X$ and $Y$ are related as
\begin{equation}\label{eq:volumecovering}
\vol_Y = \int_M p^*(\alpha \wedge \drm \alpha) = k \int_N \alpha \wedge \drm \alpha = k \, \vol_X.
\end{equation}

Formula (\ref{eq:oneforms}) can also be used to derive a slight generalization of Proposition 2.1 in \cite{geiges:vector}, see Lemma \ref{lem:samereeb} below. Before formulating the statement, let us introduce some notation that will also be used throughout the remainder of this paper. Given two vector fields $X$ and $Y$ on a manifold $M$, we write $X \sim Y$ if there is a positive function $\lambda \in C^{\infty} (M)$ such that $Y = \lambda X$.

\begin{lemma}\label{lem:samereeb}
Let $\alpha_0$ and $\alpha_1$ be two contact forms on a closed 3-manifold $M$ such that $R_{\alpha_0} \sim R_{\alpha_1}$. Then, the contact structures $\ker \alpha_0$ and $\ker \alpha_1$ are diffeomorphic.
\end{lemma}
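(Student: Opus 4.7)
The plan is to apply Gray's stability theorem to the straight-line interpolation $\alpha_t := (1-t)\alpha_0 + t\alpha_1$, $t \in [0,1]$. Set $R := R_{\alpha_0}$ and write $R_{\alpha_1} = \lambda R$ for the smooth positive function $\lambda \in C^\infty(M)$ provided by the hypothesis, so that $\alpha_1(R) = 1/\lambda$ and
\[
i_R \, d\alpha_1 = \lambda^{-1} \, i_{R_{\alpha_1}} d\alpha_1 = 0.
\]
Linearity then gives $\alpha_t(R) = (1-t) + t/\lambda > 0$ and $i_R \, d\alpha_t = 0$ for every $t$, so $R$ is everywhere transverse to $\ker \alpha_t$ and lies in the kernel of $d\alpha_t$.

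The crux is to verify that each $\alpha_t$ is a contact form. Contracting with $R$ gives $i_R(\alpha_t \wedge d\alpha_t) = \alpha_t(R)\, d\alpha_t$, so $\alpha_t \wedge d\alpha_t$ is nowhere zero if and only if $d\alpha_t$ has rank $2$ everywhere, equivalently if and only if the $2$-form $d\alpha_t|_{\xi_0}$ is nowhere vanishing on the rank-$2$ bundle $\xi_0 := \ker \alpha_0$. Both $d\alpha_0|_{\xi_0}$ and $d\alpha_1|_{\xi_0}$ are already nonvanishing area forms on $\xi_0$, so it suffices to show that they are pointwise positive multiples of a common local area form; the convex combination $(1-t)\,d\alpha_0|_{\xi_0} + t\, d\alpha_1|_{\xi_0}$ will then be nowhere zero. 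For this I would evaluate identity~(\ref{eq:oneforms}) applied to $(\alpha_0,\alpha_1)$ on a frame $(R, v, w)$ with $v, w \in \xi_0$, using the vanishings $i_R d\alpha_i = 0$ and the positivity $\alpha_i(R) > 0$ to pin down the common sign of $d\alpha_0|_{\xi_0}$ and $d\alpha_1|_{\xi_0}$.

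With $\{\alpha_t\}_{t\in[0,1]}$ established as a smooth path of contact forms on the closed manifold $M$, Gray's stability theorem yields an isotopy $\psi_t \col M \to M$ with $\psi_0 = \id_M$ and $\psi_t^* \alpha_t = \mu_t \, \alpha_0$ for a smooth positive family of functions $\mu_t$. At $t=1$ the diffeomorphism $\psi_1$ sends $\ker \alpha_0$ to $\ker \alpha_1$, proving the lemma.

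The main obstacle I anticipate is precisely the pointwise sign matching of $d\alpha_0|_{\xi_0}$ and $d\alpha_1|_{\xi_0}$: if these two nonvanishing area forms disagreed in sign at some point of $M$, then $d\alpha_t|_{\xi_0}$ would degenerate for an intermediate $t$, the contact condition would fail, and the Gray-stability argument would break down. All the other ingredients (transversality of $R$, Cartan-calculus identities, Gray's theorem) are straightforward.
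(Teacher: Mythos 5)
Your setup is right and matches the paper's strategy — linear interpolation plus Gray stability, with everything reduced to showing that $\drm\alpha_0$ and $\drm\alpha_1$ induce the same orientation on a hyperplane field transverse to the common Reeb direction. But the step you flag as the crux is exactly where your proposal has a genuine gap: the sign agreement is \emph{not} a pointwise fact, so evaluating identity (\ref{eq:oneforms}) on a frame $(R,v,w)$ at a single point cannot pin it down. Pointwise, the identity only gives
\[
(1-\lambda\mu)\,\drm\alpha_0(v,w) \;=\; (1-\lambda)(1+\mu)\,\drm\alpha_0(v,w) \;+\; \drm(\alpha_0\wedge\alpha_1)(R,v,w),
\]
i.e.\ $(\mu-\lambda)\,\drm\alpha_0(v,w) = -\,\drm(\alpha_0\wedge\alpha_1)(R,v,w)$, and the exact term on the right is an arbitrary real number at any given point; it carries no sign information. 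Indeed, on $\R^3$ the forms $\drm z + x\,\drm y$ and $\drm z - x\,\drm y$ are both contact with the same Reeb vector field $\partial_z$, yet they induce opposite orientations, and the linear interpolation between them degenerates at $t=1/2$. So no local or pointwise argument can succeed; the hypothesis that $M$ is closed must enter.

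The missing idea is to \emph{integrate} identity (\ref{eq:oneforms}) over $M$, so that the troublesome exact term $\drm(\alpha_0\wedge\alpha_1)$ is killed by Stokes' theorem. Writing $\drm\alpha_1=\mu\,\drm\alpha_0$ (possible since both $2$-forms have the same one-dimensional kernel, so $\mu$ is nowhere zero and has constant sign) and $\lambda=\alpha_1(R_{\alpha_0})>0$, integration of (\ref{eq:oneforms}) yields $\int_M(\mu-\lambda)\,\alpha_0\wedge\drm\alpha_0=0$. If the orientations disagreed, $\mu$ would be everywhere negative, making $\mu-\lambda$ everywhere negative and the integral against the volume form $\alpha_0\wedge\drm\alpha_0$ nonzero — a contradiction. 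This global integral argument is what the paper does, and it is the one ingredient your outline lacks; everything else in your proposal (transversality of $R$, $i_R\drm\alpha_t=0$, the reduction of the contact condition to nondegeneracy of $\drm\alpha_t$ on $\xi_0$, and the application of Gray stability) is correct.
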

\begin{proof}
Note that it suffices to prove that $\alpha_0 \wedge \drm \alpha_0$ and $\alpha_1 \wedge \drm \alpha_1$ define the same orientation of $M$. Indeed, if this is the case, the 2-forms $\drm \alpha_0$ and $\drm \alpha_1$ restrict to non-degenerate forms defining the same orientation on any hyperplane field transverse to $R_{\alpha_0} \sim R_{\alpha_1}$. Then, the 1-form $\alpha_t := (1-t) \, \alpha_0 + t \, \alpha_1$ is contact for every $t \in [0,1]$, and the statement follows from Gray stability (see \cite[Thm. 2.2.2]{geiges:2008}). 

Therefore, for the sake of contradiction, assume that the orientations induced by $\alpha_0$ and $\alpha_1$ are opposite. Since $R_{\alpha_0} \sim R_{\alpha_1}$, the 2-forms $\drm \alpha_0$ and $\drm \alpha_1$ must be multiples of each other, so we may write $\drm \alpha_1 = \mu \, \drm \alpha_0$ where $\mu$ is a function $M \to \R_{>0}$. Also, set $\lambda := \alpha_1(R_{\alpha_0}) \in C^{\infty}(M,\,\R_{>0})$.
It follows that
\[
\alpha_1 \wedge \drm \alpha_1 = \lambda \mu \, \alpha_0 \wedge \drm \alpha_0,
\]
and 
\[
(\alpha_0 - \alpha_1) \wedge (\drm \alpha_0 + \drm \alpha_1) = (1 - \lambda) (1 + \mu) \, \alpha_0 \wedge \drm \alpha_0.
\]
Then, identity (\ref{eq:oneforms}) implies that
\begin{align*}
\int_M (1 - \lambda \mu) \, \alpha_0 \wedge \drm \alpha_0 &= \int_M \alpha_0 \wedge \drm \alpha_0 - \int_M \alpha_1 \wedge \drm \alpha_1 
\\&= \int_M (\alpha_0 - \alpha_1) \wedge (\drm \alpha_0 + \drm \alpha_1)
\\&= \int_M (1- \lambda)(1+\mu) \, \alpha_0 \wedge \drm \alpha_0.
\end{align*}
But then $\int_M (\mu - \lambda) \, \alpha_0 \wedge \drm \alpha_0$ must vanish, which is impossible since $\mu - \lambda$ is assumed to be negative everywhere. Hence, we arrive at a contradiction.
\end{proof}

\section{Proof of Theorem \ref{thm:main2}}\label{section:main}
Let $M$ be a closed orientable complete flat 3-manifold. As discussed in the introduction, $M$ can be written (up to affine diffeomorphism) as $M = T^3 / \Gamma$, where $\Gamma < \isom(T^3)$ is a finite subgroup of isometries of $T^3$ acting freely and orientation-preservingly. Here, $T^3$ is the standard flat 3-torus, that is, $T^3 = \E^3 / (2 \pi \, \Z^3)$, where the $\Z^3$-action is generated by translations in the standard coordinate directions. By the following proposition, it is in fact enough to consider $M = T^3$.

\begin{prop}
Let $X$ be a geodesic vector field on $M = T^3 /\Gamma$ and $\xlif$ its lift to $T^3$. Then $X$ is conformally Reeb if and only if $\xlif$ is conformally Reeb.
\end{prop}
\begin{proof}
Assume first that $X$ is conformally Reeb. That is, there is a contact form $\alpha$ on $M$ such that $X \sim R_{\alpha}$. Let $\pi \col T^3 \to M$ be the natural projection. Then $p^* \alpha$ is again a contact form, and clearly $R_{p^* \alpha} \sim \xlif$.

Conversely, assume that $\xlif \sim R_{\alphtil}$ for some contact form $\alphtil$ on $T^3$. Since $\abs{\Gamma} < \infty$, we can average under the action of $\Gamma$ to get a 1-form
\[
\alpha := \frac{1}{\abs{\Gamma}} \sum_{\gamma \in \Gamma} \gamma^* \alphtil.
\]
Then $\alpha$ is again a contact form, since $\gamma_* \xlif = \xlif$ and $\drm \gamma$ maps the hyperplane field $\xlif^{\perp}$ orientation-preservingly to itself, for every $\gamma \in \Gamma$. Here we are using the fact that in dimension 3, a 1-form $\beta$ with $\beta(\xlif) > 0$ is contact if and only if $\drm \beta$ is non-vanishing on any hyperplane field transverse to $\xlif$.  It also follows that $R_{\alpha} = R_{\alphtil} \sim \xlif$. Now since $\gamma^* \alpha = \alpha$ for every $\gamma \in \Gamma$, $\alpha$ descends to a contact form on $M$, whose Reeb vector field is a multiple of $X$.
\end{proof}

We may now, for the remainder of the section, assume that $M = T^3$. We may further assume that the geodesic vector field $X$ is not constant, for otherwise, there is an embedded 2-torus transverse to $X$ and so by Stokes' theorem, $X$ cannot be (conformally) Reeb.
Let $\xdlif$ be the lift of the geodesic vector field $X$ to $\E^3$. By Theorem \ref{thm:main1}, $\xdlif$ is tangent to a fibration $\mc{P}$ of affine planes. Now choose a parallel orthonormal frame $(E_1, E_2, E_3)$ of $\E^3$ such that $E_1$ and $E_2$ span the fibers of $\mc{P}$. This frame descends to an orthonormal frame of $T^3$, which we call $(E_1,E_2,E_3)$ again. Then $E_1$ and $E_2$ span the leaves of the totally geodesic foliation $\mc{P}_T$ of $T^3$ covered by $\mc{P}$. Let us see that $\mc{P}_T$ is in fact a $T^2$-fibration over $S^1$. First note that the leaves are embedded copies of $T^2$. Indeed, each leaf $P_T \in \mc{P}_T$ is covered by a plane $P \in \mc{P}$, hence $P_T$ is either a 2-torus, or a dense immersed cylinder $S^1 \times \R$, or a dense immersed copy of $\R^2$. But since $X$ is constant on each leaf, the existence of dense leaves would force $\xlif$ to be globally constant, which we already ruled out. 

Next, define a map $\zeta \col T^3 \to S^1$ as follows. Fix a 2-torus $P_T \in \mc{P}_T$ covered by a plane $P \in \mc{P}$. Then, the orbit of $P$ under the action of $2 \pi \, \Z^3$ is a discrete set of equidistant planes. This follows from the fact that $P_T$ is not dense in $T^3$, and that the $\Z^3$-action on $\E^3$ commutes with the action of $\R^3$ by translations. For the same reason, the minimal distance $t_0$ of two such planes does not depend on the specific fiber $P_T$ of $\mc{P}_T$. Then, for $q \in T^3$, we may define $\zeta(q)$ as
\[
\zeta(q) := \frac{2\pi \, t_q}{t_0} \mod 2\pi \in S^1 = \R / 2\pi  \Z,
\]
where $t_q > 0$ is the smallest number such that $\Phi_{t_q}(q)$ (where $\Phi$ is the flow of $E_3$) lies in the fiber $P_T$. This defines a fibration of $T^3$ whose fibers are the elements of $\mc{P}_T$. Now we can write $X$ as
\begin{equation}\label{eq:xlif}
X = \sin \theta(\zeta) \, E_1 + \cos \theta(\zeta) \, E_2
\end{equation}
for some function $\theta \col S^1 \to S^1$. Using the identification $S^1 = \R / 2\pi \Z$, we may think of $\theta$ (or any other function $S^1 \to S^1$) as a function $\R \to \R$, such that $\theta(t+2\pi) - \theta(t) \in 2 \pi \, \Z$ for all $t \in \R$. In fact, we have that $\theta(t+2\pi) - \theta(t) = 2\pi \, \deg \theta$, where $\deg \theta$ is the degree of $\theta$ as a map of $S^1$. By $\theta^\prime$ we mean the usual derivative of $\theta$ when viewed as a function defined on $\R$.

We will first prove the following proposition.
\begin{prop}\label{lem:mainLemma}
Let $X$ be a geodesic vector field on $T^3$. Then the following are equivalent.
\begin{enumerate}[(i)]
    \item X is conformally Reeb.
    \item $\deg \theta \neq 0$ and for any $a, b \in \R, \, a < b$ we have that
    \[
    \theta(b) - \theta(a) > - \pi, \quad \text{if } \deg \theta > 0,
    \]
    and
    \[
    \theta(b) - \theta(a) < \pi, \quad \text{if } \deg \theta < 0.
    \]
    \item The set
    \[
    \mc{B} := \left\{\varphi \in C^{\infty}(S^1,S^1) \col \varphi^\prime \neq 0, \, \abs{\varphi-\theta} < \frac{\pi}{2}\right\}
    \]
    is non-empty (here, $\abs{.}$ is the Euclidean norm (modulo $2\pi$)).
    
\end{enumerate}
\end{prop}
\begin{proof}
We first show that (iii) implies (ii). So assume that (iii) holds, and choose some $\varphi \in \mc{B}$. Note that $\deg \theta = \deg \varphi \neq 0$. If $\deg \theta > 0$, then $\varphi^\prime$ must be positive everywhere. Then, for $a < b$,
\[
\theta(a) - \frac{\pi}{2} < \varphi(a) < \varphi(b) < \theta(b) + \frac{\pi}{2},
\]
which implies that $\theta(b) - \theta(a) > - \pi$. A similar argument applies for the case of $\deg \theta$ being negative.

Conversely, if (ii) holds, we need to show that $\mc{B} \neq \emptyset$. We will do so by constructing some $\varphi \in \mc{B}$ explicitly. First, perhaps after applying a $C^0$-small perturbation, we may assume that $\theta$ has finitely many local minima and maxima, respectively, and no other critical points. That is, there is a subdivision
\[
0 < a_1 < b_1 < \ldots < a_n < b_n < 2\pi,
\]
such that $\theta$ has a local maximum at every $a_k$ and a local minimum at every $b_k$. Define intervals $I_k$ by
\[
I_k := [\theta(a_k)-\pi/2,\,\theta(b_k)+\pi/2].
\]
Note that it follows from (ii) that $I_k$ does indeed define an interval with non-empty interior. Moreover, we have that
\begin{equation}\label{eq:intervals}
\max I_k > \min I_l
\end{equation}
for every $k = 1,\ldots,n$ and every $l = 1, \ldots, k$.
Now we want to find some numbers 
\[
c_1 \leq c_2 \leq \ldots \leq c_n,
\]
such that $c_k \in I_k$ for every $k$. Given such numbers, we can define a function $\phi$ with the following properties:
\begin{itemize}
    \item $\abs{\phi - \theta} < \pi/2$ everywhere;
    \item $\phi$ is constantly equal to $c_k$ on $[a_k,b_k]$;
    \item $\phi$ is non-decreasing.
\end{itemize}
This is illustrated in Figure \ref{fig:phi}. This construction is possible since $\theta$ is strictly increasing on $(b_k,a_{k+1})$. Then $\phi$ can be approximated by a function in $\mc{B}$. Therefore, all we are left to do is find numbers $c_k$ as above. This is best done reversely, starting with $c_n$. Set $c_n := \max I_n$. The remaining $c_k$ are defined inductively as
\[
c_k := \min \set{c_{k+1}, \, \max I_k} \leq c_{k+1}.
\]
Note that $c_k \in I_k$ since $c_k$ is given by the maximum of some $I_l, \, l \geq k$, so that $c_k > \min I_k$ by (\ref{eq:intervals}). This concludes the proof of the equivalence of (ii) and (iii).

\begin{figure}[ht]
\labellist
\small\hair 2pt
\pinlabel $2\pi\,\deg\theta$ [r] at 85 350
\pinlabel $c_k$ [r] at 85 210
\pinlabel $I_k$ [r] at 50 210
\pinlabel $\frac{\pi}{2}$ [r] at 85 176
\pinlabel $-\frac{\pi}{2}$ [r] at 85 58
\pinlabel $a_k$ [t] at 360 115
\pinlabel $b_k$ [b] at 412 122
\pinlabel $2\pi$ [t] at 637 117
\pinlabel $\theta$ [bl] at 413 325
\pinlabel $\phi$ [tl] at 512 190
\endlabellist
\centering
\includegraphics[scale=0.5]{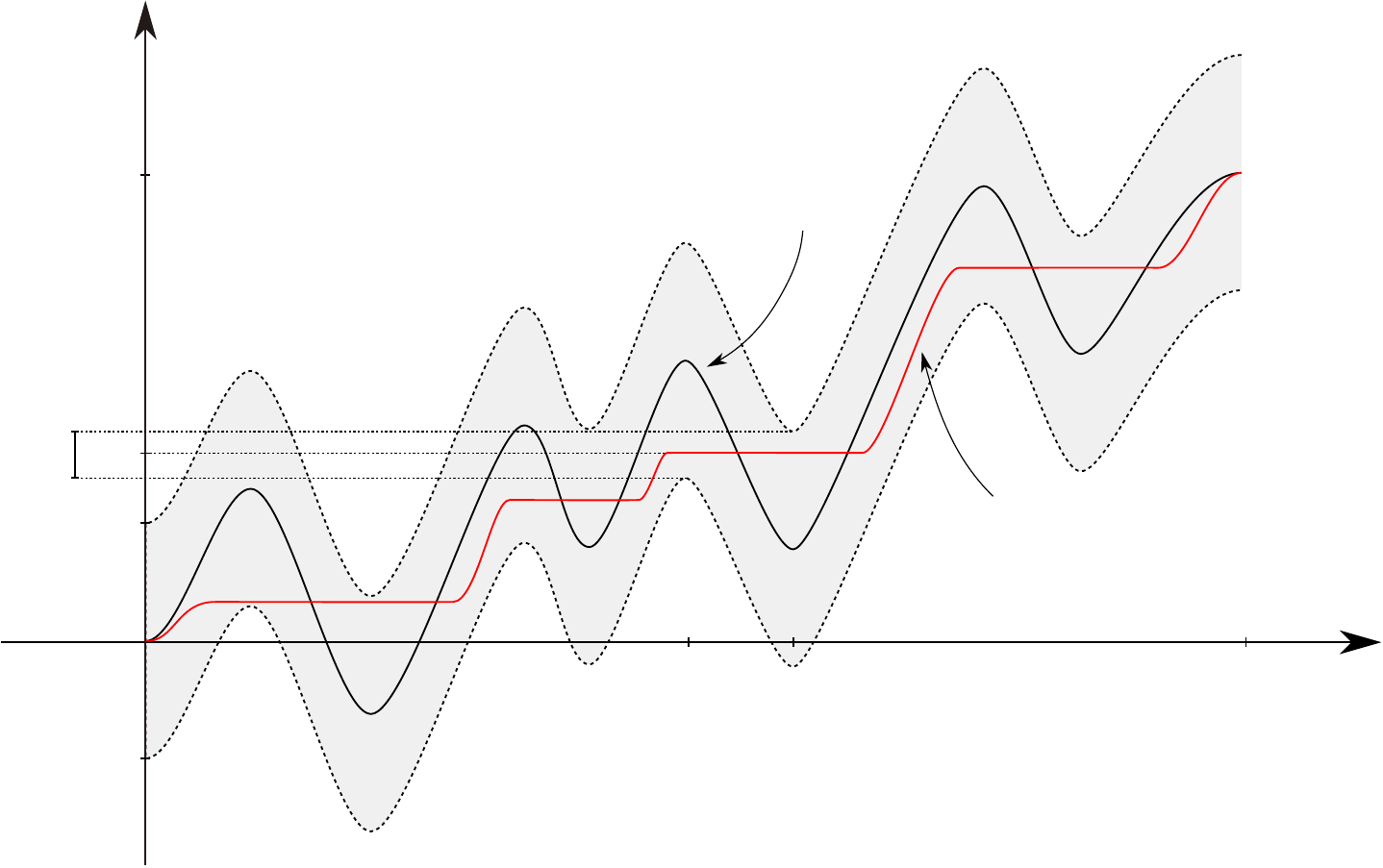}
\caption{Construction of a function $\phi$ that can be approximated by a function in $\mc{B}$.}
\label{fig:phi}
\end{figure}

% \begin{figure}[!ht]
% \centering
% \def\svgwidth{400pt}
% \input{phi.pdf_tex}
% \caption{The construction of $\varphi$.}
% \label{fig:phi}
% \end{figure}

Now, let us see how (iii) implies (i). Given $\varphi \in \mc{B}$ as in (iii), consider the 1-form
\[
\alpha = \sin \varphi(\zeta)\, \mc{E}^1 + \cos \varphi(\zeta)\, \mc{E}^2,
\]
where $(\mc{E}^1,\mc{E}^2,\mc{E}^2)$ is the dual frame to $(E_1,E_2,E_3)$. A simple calculation shows that 
\[
\drm \alpha = \varphi^\prime(\zeta) \cos \varphi(\zeta) \, \mc{E}^3 \wedge \mc{E}^1 - \varphi^\prime(\zeta) \sin \varphi(\zeta) \, \mc{E}^3 \wedge \mc{E}^2,
\]
hence
\[
\alpha \wedge \drm \alpha = \varphi^\prime(\zeta) \, \mc{E}^1 \wedge \mc{E}^2 \wedge \mc{E}^3 \neq 0,
\]
so $\alpha$ is a contact form. Its Reeb vector field is given by 
\[
R_{\alpha} = \sin \varphi(\zeta) \, E_1 + \cos \varphi(\zeta) \, E_2.
\]
We claim that, for a suitably chosen $\varphi \in \mc{B}$, there are functions $f, g \in C^{\infty}(S^1, \R_{>0})$, such that
\begin{equation}\label{eq:reeb} 
f(\zeta) \, X = R_{(1/g(\zeta))\alpha}.
\end{equation}
Generally, for $h \in C^{\infty}(T^3, \R_{>0})$, we have that $R_{(1/h)\alpha} = h \, R_{\alpha} + Y$, where $Y$ is the unique vector field satisfying $\alpha(Y) = 0$ and
\begin{equation}\label{eq:y}
i_Y \drm \alpha = \drm h(R_{\alpha}) \, \alpha - \drm h.
\end{equation}
Now, if $h = g \circ \zeta$ for some function $g \in C^{\infty}(S^1, \R_{>0})$, then $\drm h(R_{\alpha}) = \drm g \circ \drm \zeta (R_{\alpha}) = 0$, so (\ref{eq:y}) translates to
\[
i_Y \drm \alpha = -\drm h = - \drm g \circ \drm \zeta = - g^\prime(\zeta) \, \mc{E}^3,
\]
where we again think of $g$ as a $2\pi$-periodic function $\R \to \R$, with $g^\prime$ being its usual derivative.
Then, to solve equation (\ref{eq:reeb}), we need to find functions $f$ and $g$ such that $Y :=  f(\zeta) \, X - g(\zeta) \, R_{\alpha}$ satisfies
\begin{equation}\label{eq:eq1}
0 = \alpha(Y) = f(\zeta) \, \alpha(X) -g(\zeta),
\end{equation}
as well as
\begin{equation}\label{eq:eq2}
i_Y \drm \alpha = - g^\prime(\zeta)\, \mc{E}^3.
\end{equation}
Now (\ref{eq:eq1}) is equivalent to $g = f \cos(\varphi-\theta)$, which is positive iff $f$ is positive, since $\abs{\varphi-\theta} < \pi/2$. 
Then (\ref{eq:eq2}) translates to
\[
f\, \varphi^\prime  \sin(\varphi - \theta) \, \mc{E}^3 = -(f^\prime  \cos(\varphi-\theta) - f \, (\varphi^\prime - \theta^\prime) \sin(\varphi-\theta))\,\mc{E}^3,
\]
where we refrained from writing $\zeta$ in the arguments for simplicity. This, in turn, reduces to
\[
f^\prime  \cos(\varphi - \theta) + f \, \theta^\prime  \sin(\varphi-\theta) = 0. 
\]
This differential equation is being solved by
\[
f(x) := \exp\left(-\int_0^x \tan(\varphi(t) - \theta(t)) \, \theta^\prime(t) \drm t \right) > 0.
\]
However, for a generic choice of $\varphi$, $f$ is not $2\pi$-periodic, hence it does not define a function on $S^1$. Note that $f$ is $2\pi$-periodic if and only if
\[
I(\varphi) := \int_0^{2\pi} \tan(\varphi(t)-\theta(t)) \, \theta^\prime(t) \drm t
\]
vanishes.
Therefore, we need to show that the function $I \col \mc{B} \to \R$ has a zero. First observe that since 
\[
\int_0^{2\pi} \tan (\varphi(t) - \theta(t)) (\varphi^\prime(t) - \theta^\prime(t)) \,\drm t = \int_{x_0}^{x_0} \tan(u) \, \drm u = 0,
\]
we have that
\[
I(\varphi) = \int_0^{2\pi} \tan(\varphi(t) - \theta(t)) \, \varphi^\prime(t) \, \drm t.
\]
Now, it is easy to see that $\mc{B}$ is convex. Hence, it suffices to find functions $\varphi^+, \varphi^- \in \mc{B}$ such that $I(\varphi^+) > 0$ and $I(\varphi^-) < 0$. For then we can simply interpolate between $\varphi^+$ and $\varphi^-$ to find a zero of $I$.
To achieve this, one can adjust the construction of $\varphi$ in the proof of (ii) $\Rightarrow$ (iii) so that $\varphi < \theta$ wherever $\varphi$ is not constant (in fact, the function $\varphi$ drawn in Figure \ref{fig:phi} has this property). By approximating this function with a function in $\mc{B}$, we obtain a function $\varphi^- \in \mc{B}$ with $I(\varphi^-) < 0$. The function $\varphi^+$ is constructed similarly.

To finish the proof, we show that (i) implies (ii). Assume that $X \sim R_{\alpha}$ for some contact form $\alpha$ of $T^3$. Suppose, for the sake of contradiction, that (ii) does not hold. Assume for the moment that $\deg \theta > 0$. Then (ii) being false means that there are $a, b \in [0,2\pi]$ with $a < b$ such that $\theta(b) - \theta(a) = - \pi$, as well as $c, d \in [0,2\pi]$ with $b < c < d$ such that $\theta(c) = \theta(b)$ and $\theta(d) = \theta(a)$ (since $\deg \theta >0$). Furthermore, we may choose $a,b$ and $c,d$ so that 
\begin{equation}\label{eq:theta}
\theta(x) \in [\theta(b),\theta(a)] \quad \text{for all} \quad x \in [a,b] \cup [c,d].
\end{equation}
Now choose a point $p \in \E^3$ that projects to a point in $\zeta^{-1}(a) \subset T^3$ and let $P$ be the affine plane in $\E^3$ through $p$ spanned by $E_3$ and $\xdlif(p)$. Assume for the moment that $P$ covers a 2-torus in $T^3$, which we call $\Sigma$. Consider the two subsets
\[
\Sigma_1 := \Sigma \cap \{a \leq \zeta \leq b\}, \quad \Sigma_2 := \Sigma \cap \{c \leq \zeta \leq d\}.
\]
Both $\Sigma_1$ and $\Sigma_2$ are diffeomorphic to cylinders, and their boundaries are integral curves of $X$. Choose any orientation for $\Sigma$ and orient $\Sigma_1$ and $\Sigma_2$ accordingly as submanifolds of $\Sigma$. Denote the (oriented) boundary curves of $\Sigma_1$ and $\Sigma_2$ by
\[
\partial \Sigma_1 = \gamma_a \sqcup \gamma_b, \quad \partial \Sigma_2 = \gamma_c \sqcup \gamma_d.
\]
We may choose the orientation of $\Sigma$ so that $\gamma_a$ and $\gamma_b$ are negatively tangent to $X$, whereas $\gamma_c$ and $\gamma_d$ are positively tangent, see Figure \ref{fig:sigma}.
It follows that
\[
\int_{\Sigma_1} \drm \alpha = \int_{\gamma_a} \alpha + \int_{\gamma_b} \alpha < 0,
\]
and 
\[
\int_{\Sigma_2} \drm \alpha = \int_{\gamma_c} \alpha + \int_{\gamma_d} \alpha > 0.
\]
However, it follows from (\ref{eq:theta}) that $X$ (and then also $R_{\alpha})$ is positively transverse to the interiors of both $\Sigma_1$ and $\Sigma_2$. Then, since $\Sigma_1$ and $\Sigma_2$ are oriented consistently, $\int_{\Sigma_1} \drm \alpha$ and $\int_{\Sigma_2} \drm \alpha$ must have the same sign, and we arrive at a contradiction. 

We are left to deal with the case of $P$ covering some dense infinite cylinder in $T^3$ (instead of a 2-torus). Parametrize $P$ using coordinates $s$ and $t$, so that $\partial_s$ is identified with $\xdlif(p)$. Consider subsets of the form 
\[
P_{s_0} := P \cap \set{-s_0 \leq s \leq s_o} \subset P
\]
for some $s_0 > 0$. Then $P_{s_0}$ covers a cylinder in $T^3$, which we call $\Sigma = \Sigma^{s_0}$.
Defining $\Sigma_1 = \Sigma^{s_0}_1$ as before, we now have
\[
\partial \Sigma_1 = \partial_v \Sigma_1 \cup \partial_h \Sigma_1,
\]
where $\partial_v \Sigma_1 = \gamma_a \sqcup \gamma_b$ and $\partial_h \Sigma_1 = \partial \Sigma_1 \cap \partial \Sigma$. In other words, $\partial_v \Sigma_1$ and $\partial_h \Sigma_1$ are the "vertical" and "horizontal" part of $\partial \Sigma_1$, respectively. Note that, since $\alpha$ is non-zero on the vertical boundary components, we have that
\begin{equation}\label{eq:integral}
\left| \int_{\partial_v \Sigma^{t_0}_1} \alpha \right| > \left | \int_{\partial_v \Sigma^{s_0}_1} \alpha \right |
\end{equation}
for $t_0 > s_0$. Now let 
\[
C := \left| \int_{\partial_v \Sigma^{s_0}_1} \alpha \right|
\]
for some $s_0$, and choose $t_0 > s_0$ large enough so that 
\[
\left | \int_{\partial_h \Sigma^{t_0}_1} \alpha \right | < C.
\]
This can be done due to the fact that $P$ covers a dense cylinder in $T^3$.
Then (\ref{eq:integral}) implies that 
\[
\sgn \Bigg(\int_{\partial \Sigma^{t_0}_1} \alpha \Bigg) = \sgn \Bigg(\int_{\partial_v \Sigma^{t_0}_1} \alpha \Bigg),
\]
and the same may be assumed for $\Sigma^{t_0}_2$. Then, using the same reasoning as in the first case, we arrive at a contradiction again.

\begin{figure}[ht]
\labellist
\small\hair 2pt
\pinlabel $\Sigma_1$ [br] at 260 80
\pinlabel $\Sigma_2$ [br] at 495 80
\pinlabel $\Sigma$ [br] at 610 80
\pinlabel $a$ [b] at 150 7
\pinlabel $b$ [b] at 267 7
\pinlabel $c$ [b] at 385 7
\pinlabel $d$ [b] at 500 7
\pinlabel $\zeta$ [b] at 640 35
\pinlabel \color{red}$X$ [r] at 145 260
\pinlabel \color{red}$X$ [r] at 377 260
\pinlabel $\gamma_a$ [r] at 145 208
\pinlabel $\gamma_b$ [l] at 275 208
\pinlabel $\gamma_c$ [r] at 377 208
\pinlabel $\gamma_d$ [l] at 507 208
\endlabellist
\centering
\includegraphics[scale=0.5]{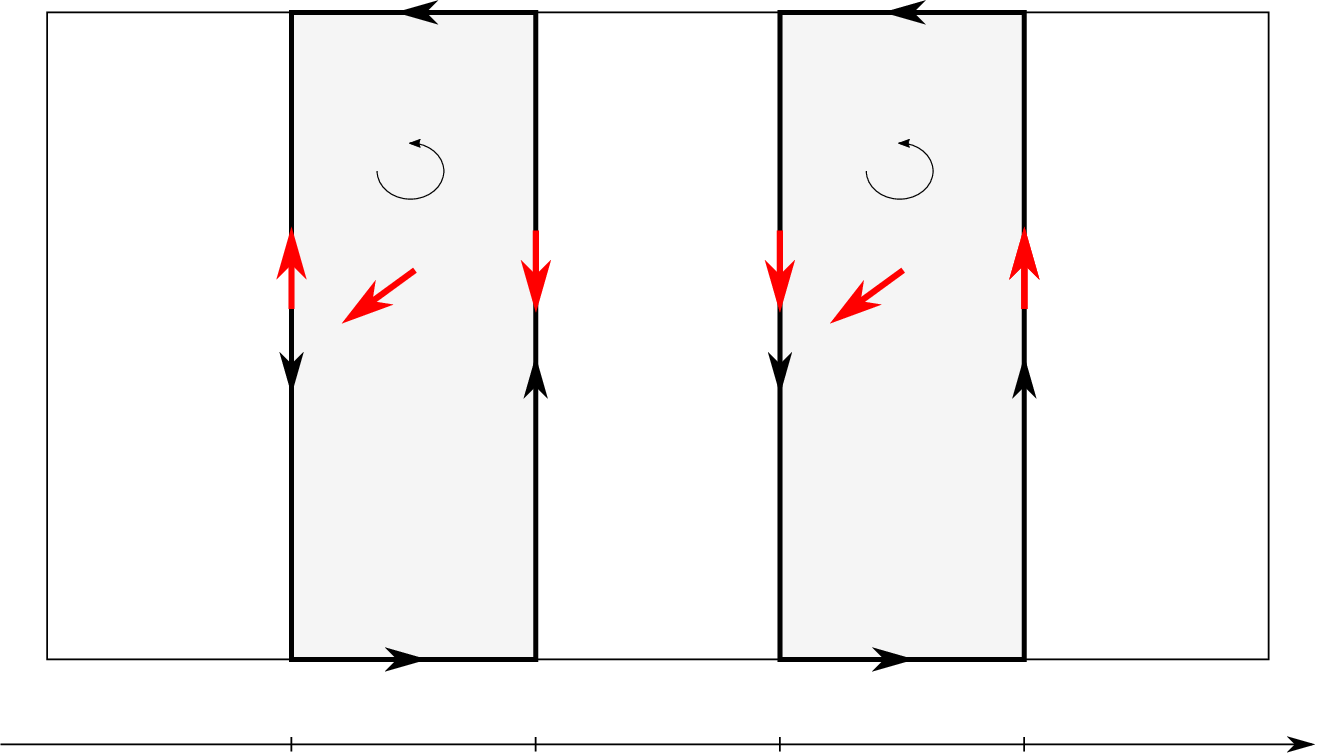}
\caption{$\Sigma_1$ and $\Sigma_2$.}
\label{fig:sigma}
\end{figure}

The case $\deg \theta < 0$ is analogous. Now we are still left to show that $\deg \theta$ is indeed non-zero. Note that if $\deg \theta = 0$ and the image of $\theta$ is contained in an open interval of length at most $\pi$, then there is an embedded 2-torus transverse to $X$, so that $X$ cannot be conformally Reeb. Therefore, we may again assume that there are $a < b < c < d$ with $\theta(b) - \theta(a) = \mp \pi$, $\theta(c) = \theta(b)$ and $\theta(d) = \theta(a)$, and we arrive at a contradiction using the same argument as before.
\qedhere
\end{proof}
\begin{proof}[Proof of Theorem \ref{thm:main2}]
Assume first that there is a geodesic vector field $Y$ on $M$ inducing a contact structure $\xi$ such that $X$ is everywhere transverse to $\xi$. In other words, $X$ and $Y$ are nowhere orthogonal. Then the same is true for the lifted vector fields $\xlif$ and $\ylif$ on $T^3$. In particular, both $\xlif$ and $\ylif$ are non-constant, so by the discussion prior to Proposition \ref{lem:mainLemma}, there are fibrations $\mc{T}_X$ and $\mc{T}_Y$ of $T^3$ by 2-tori tangent to $\xlif$ and $\ylif$, respectively. These two fibrations must coincide: Indeed, if this were not the case, we could consider a loop in some $T \in \mc{T}_X$ that is transverse to $\xlif$ and also transverse to $\mc{T}_Y$. Along this loop, $\xlif$ is constant, whereas $\ylif$ must make at least one complete turn (since $\ylif$ induces a contact structure), hence $\xlif$ and $\ylif$ are orthogonal somewhere, a contradiction. Therefore, writing $\xlif$ as
\[
\xlif = \sin \theta(\zeta) \, E_1 + \cos \theta(\zeta) \, E_2
\]
as in (\ref{eq:xlif}), we find that $\ylif$ is of the form
\begin{equation}\label{eq:yt}
\ylif = \sin \varphi(\zeta) \, E_1 + \cos \varphi(\zeta) \, E_2
\end{equation}
for some function $\varphi \col S^1 \to S^1$ with $\varphi^\prime \neq 0$. We also have that $|\varphi-\theta| < \pi/2$ everywhere. Thus, it follows from Proposition \ref{lem:mainLemma} that $X$ is conformally Reeb.

Conversely, assume that $X$ is conformally Reeb, that is, $X \sim R_{\alpha}$ for some contact form $\alpha$ on $M = T^3 / \Gamma$. Write the lifted vector field on $T^3$ again as $\xlif = \sin \theta(\zeta) \, E_1 + \cos \theta(\zeta) \, E_2$. Then, by Proposition \ref{lem:mainLemma}, there is a function $\varphi \col S^1 \to S^1$ such that $\varphi^\prime \neq 0$ and $|\varphi-\theta| < \pi/2$ everywhere. In particular, the geodesic vector field $\ylif := \sin \varphi(\zeta) \, E_1 + \cos \varphi(\zeta) \, E_2$ induces a contact structure and is nowhere orthogonal to $\xlif$. If $M = T^3$, then $Y = \ylif$ and we are done. So suppose that $M$ is not equal to $T^3$, that is, $M = T^3 / \Gamma$, where $\Gamma$ is a non-trivial subgroup of $\isom(T^3)$. We want to adjust the construction of $\ylif$ (resp. $\varphi$) so that it is invariant under the action of $\Gamma$, and therefore descends to a geodesic vector field $Y$ on $M$. First note that every element of $\Gamma$ must be a screw motion of finite order in $\Gamma$, since glide reflections are not orientation-preserving. If $\gamma \in \Gamma$ is such a screw motion, then $\gamma$ must preserve the fibration $\mc{T}$ of 2-tori defined by $\zeta$. Indeed, if there were some $T \in \mc{T}$ such that $\gamma(T) \notin \mc{T}$, then $\gamma(T)$ would intersect every fiber of $\mc{T}$ transversely. Now since $\xlif$ is constant along each fiber of $\mc{T}$ and also constant along $\gamma(T)$ (since $\gamma_* \xlif = \xlif$), it would follow that $\xlif$ is globally constant. In particular, $\xlif$ cannot be Reeb, a contradiction. But this means that the axis of rotation of $\gamma$ (and consequently its translational part) must be orthogonal to $\mc{T}$. In other words, the translation vector of $\gamma$ is a multiple of $E_3$. Now choose $\gamma_0 \in \Gamma$ so that the absolute value of its translational part is minimal among all elements of $\Gamma$. Then $\gamma_0$ generates $\Gamma$ (in particular, $\Gamma$ is cyclic). Write $\gamma_0$ as $\gamma_0 = T_{\lambda \, E_3} \circ R_{\phi}$,
where $R_{\phi}$ is rotation about the axis spanned by $E_3$ of angle $\phi$, and $T_{\lambda \, E_3}$ is translation by the vector $\lambda \, E_3$ for some real number $\lambda$. Then, it suffices to choose $\varphi$ so that $\varphi(t + \lambda) = \varphi(t) + \phi$ for all $t$, for then $\varphi \circ \zeta \circ \gamma_0 = \varphi \circ \zeta + \phi$ which implies that $(\gamma_0)_* \, Y_T = Y_T$. To find an appropriate $\varphi$, we can construct $\varphi$ first on the interval $[0,\lambda]$ as in the proof of Proposition \ref{lem:mainLemma}, and then extend it to the whole real line via $\varphi(t+\lambda) := \varphi(t) + \phi$. Of course one has to be a little careful regarding smoothness of $\varphi$, but this can be arranged easily. The vector field $\ylif$ we end up with is invariant under $\Gamma$, hence it descends to a vector field $Y$. 

To prove the second statement of the theorem, note that in Proposition \ref{lem:mainLemma} it is actually shown that if $\xlif$ is conformally Reeb for some contact form $\alpha_T$, then it is also conformally Reeb for a multiple of the contact form $\alpha_{\varphi} = \sin \varphi (\zeta) \, \mc{E}^1 + \cos \varphi(\zeta) \, \mc{E}^2$
whose kernel defines the contact structure $\xi_T$. Then, by Lemma \ref{lem:samereeb}, $\ker \alpha_T$ and $\xi_T$ are diffeomorphic. Now set 
\[
n := 2 \pi \, \deg \varphi = 2 \pi \, \deg \theta = \theta(2\pi) - \theta(0).
\]
Then $\alpha_{\varphi}$ pulls back to
\[
\alpha_n := \sin (n \, \zeta) \, \mc{E}^1 + \cos (n\, \zeta) \, \mc{E}^2
\]
via the diffeomorphism
\[
T^3 \longrightarrow T^3, \quad p \longmapsto (\Phi_{\varphi^{-1}(n \, \zeta(p))} \circ \Phi_{-\zeta(p)}) (p),
\]
where $\Phi$ denotes again the flow of $E_3$. On the other hand,
\begin{align*}
\abs{\Gamma} \, \vol_X = \vol_{\xlif} &= \int_{T^3} \alpha_T \wedge \drm \alpha_T 
\\&= \int_{T^3} \theta^\prime(\zeta) \, \mc{E}^1 \wedge \mc{E}^2 \wedge \mc{E}^3 
\\&= \theta(2\pi) \int_{\zeta^{-1}(2\pi)} \mc{E}^1 \wedge \mc{E}^2 - \theta(0) \int_{\zeta^{-1}(0)} \mc{E}^1 \wedge \mc{E}^2
\\&= n \, A,
\end{align*}
where the first equation follows from (\ref{eq:volumecovering}). Hence, $n = \abs{\Gamma} \, \vol_X /A$. \qedhere.
\end{proof}

\begin{proof}[Proof of Corollary \ref{cor:std}]
Choose global coordinates $(x,y,z)$ for $\R^3$ such that the frame $(E_1, E_2, E_3)$ on $T^3$ is covered by the coordinate frame $(\partial_x, \partial_y, \partial_z)$. Then, by Theorem \ref{thm:main2}, $\ker \tilde{\alpha}$ is diffeomorphic to the kernel of
\[
\tilde{\alpha}_n = \sin (n z) \, \dx + \cos(nz) \, \dy,
\]
which pulls back to $\alpha_{\text{st}} = \dz + x \, \dy$ via the diffeomorphism
\[
(x,y,z) \longmapsto \left(z \, \sin(ny) - \frac{x\,\cos(ny)}{n},z \, \cos(ny) + \frac{x\,\sin(ny)}{n},y\right). \qedhere
\]
\end{proof}

\section{Open flat 3-manifolds}\label{section:open}
We start with the following general result.
\begin{prop}\label{prop:reeb}
Let $M$ be an orientable $3$-manifold with $H_{dR}^2(M) = 0$, and $X$ a non-vanishing vector field on $M$ whose flow induces a free, proper $\R$-action. Then $X$ is the Reeb vector field of a contact form.
\end{prop}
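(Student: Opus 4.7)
The plan is to use the free, proper $\R$-action to reduce the problem to finding an exact area form on the quotient surface. Since the flow of $X$ defines a free and proper $\R$-action, the quotient $N := M/\R$ carries a canonical smooth structure making the projection $\pi \col M \to N$ a principal $\R$-bundle, with $X$ tangent to the fibers and generating the action. As $\R$ is contractible, any principal $\R$-bundle over a paracompact manifold is trivial (glue local sections via a partition of unity), so one obtains a diffeomorphism $M \cong N \times \R$ under which $X$ corresponds to $\partial_t$, where $t$ denotes the coordinate on the $\R$-factor. Orientability of $M$ then forces $N$ to be an orientable surface.

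Next I would use the cohomological hypothesis. Since $M \simeq N$ as $\R$ is contractible, homotopy invariance of de Rham cohomology gives $H^2_{dR}(N) = H^2_{dR}(M) = 0$. Choose any area form $\omega$ on $N$ (which exists since $N$ is an orientable surface). Being top-degree on $N$, $\omega$ is automatically closed, and hence exact: $\omega = \drm \beta$ for some 1-form $\beta$ on $N$.

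With these preparations, I define the candidate contact form by
\[
\alpha := \drm t + \pi^* \beta.
\]
A short direct calculation verifies everything: $\alpha(X) = \drm t(\partial_t) + (\pi^*\beta)(\partial_t) = 1$; moreover $\drm \alpha = \pi^*(\drm \beta) = \pi^*\omega$, so $i_X \drm \alpha = i_{\partial_t} \pi^*\omega = 0$; and
\[
\alpha \wedge \drm \alpha = (\drm t + \pi^*\beta) \wedge \pi^*\omega = \drm t \wedge \pi^*\omega,
\]
where the $\pi^*\beta \wedge \pi^*\omega$ term vanishes for dimension reasons on the $2$-manifold $N$. This is a nowhere-vanishing $3$-form, so $\alpha$ is contact, and the relations $\alpha(X) = 1$ together with $i_X \drm \alpha = 0$ say exactly that $X = R_\alpha$.

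The only genuinely non-trivial step is the reduction to the trivial bundle $N \times \R$; once that is in hand, the rest is a direct construction. The main subtlety to verify carefully is that the $\R$-action being free \emph{and} proper really does yield the smooth manifold structure on $N$ together with the principal bundle structure on $\pi$ (this is a standard consequence of the slice theorem for proper actions), and that the global section produced by contractibility of the fiber can be chosen smoothly, so that the resulting trivialization sends $X$ to $\partial_t$ on the nose.
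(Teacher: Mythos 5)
Your proposal is correct and follows essentially the same route as the paper's own proof: pass to the orbit surface $B=M/\R$, trivialize the principal $\R$-bundle to write $M\cong B\times\R$ with $X=\partial_t$, use $H^2_{dR}(B)=H^2_{dR}(M)=0$ to get an exact area form $\drm\beta$, and take $\alpha=\drm t+p^*\beta$. The verification that $\alpha$ is contact with Reeb vector field $X$ matches the paper's construction exactly.
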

\begin{proof}
Since $X$ induces a free and proper $\R$-action that is also orientation-preserving, the orbit space $B = M / \R$ is an orientable 2-dimensional manifold, and the projection $p \col M \to B$ defines a principal line bundle, which is necessarily trivial. That is, we can identify $M$ with $B \times \R$, where the $\R$-fibers correspond to the integral curves of $X$. Now $B$ is a deformation retract of $M$, so we have that $H_{dR}^2(B) = H_{dR}^2(M) = 0$. Hence, there is an exact area form $\omega = \drm \beta$ on $B$. Let $t$ denote the coordinate of the $\R$-fibers of $M = B \times \R$. Then the 1-form $\alpha := \drm t + p^* \beta$ is contact, and $R_{\alpha} = \partial_t = X$.
\end{proof}
\begin{cor}\label{cor:aper}
Let $X$ be an aperiodic geodesic vector field on $\E^3$ or $\R^2 \times S^1$. Then $X$ is the Reeb vector field of a contact form. \qed
\end{cor}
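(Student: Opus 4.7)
The plan is to verify the hypotheses of Proposition \ref{prop:reeb} in both cases. Orientability of $\E^3$ and $\R^2 \times S^1$ is immediate, and both have vanishing $H^2_{dR}$: $\E^3$ is contractible, and $\R^2 \times S^1$ deformation retracts onto $S^1$. What remains is to show that the flow of $X$ defines a free, proper $\R$-action.

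Freeness is essentially immediate: the non-vanishing of $X$ rules out fixed points, and aperiodicity rules out periodic orbits. On $\E^3$ aperiodicity is actually automatic, since the integral curves of a unit geodesic vector field are straight lines parametrized by arc length, and hence never close up. The real content of the corollary is properness.

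On $\E^3$, properness is straightforward: since $X$ is a unit vector field whose integral curves are straight lines, $\|\Phi_t(p)-p\| = |t|$, so $\Phi_t(K)\cap K \neq \emptyset$ forces $|t| \leq \mathrm{diam}(K)$. On $\R^2 \times S^1$ I would argue by contradiction: suppose there exist a compact $K$ and sequences $p_n \in K$, $t_n \to \infty$, with $\Phi_{t_n}(p_n) \in K$. Lift to the universal cover $\E^3$, whose deck group is generated by $(x,y,z) \mapsto (x,y,z+1)$, choose lifts $\tilde p_n$ in a compact fundamental region, and pass to a subsequence with $\tilde p_n \to \tilde p$. Because the orbits of $\tilde X$ are straight lines and $\tilde X$ is constant along them, one has $\Phi_{t_n}(\tilde p_n) = \tilde p_n + t_n\, \tilde X(\tilde p_n)$; for the projection to lie in $K$, this lifted point must equal a bounded vector plus some $(0,0,m_n)$, $m_n \in \Z$. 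Consequently the first two components of $t_n\, \tilde X(\tilde p_n)$ stay bounded, so since $t_n \to \infty$, the horizontal components of $\tilde X(\tilde p_n)$ tend to zero. Taking the limit, $\tilde X(\tilde p)$ is vertical of unit length, which means the orbit of $p = \pi(\tilde p)$ is the closed fibre $\{(x_p,y_p)\} \times S^1$, contradicting aperiodicity.

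The main obstacle is precisely this properness step on $\R^2 \times S^1$: translating the pointwise statement ``no orbit of $X$ closes up'' into the global statement ``the $\R$-action is proper'' uses crucially that on the universal cover $\tilde X$ is constant along each (straight-line) orbit and that closedness of an orbit downstairs is equivalent to verticality of $\tilde X$ at a point, so aperiodicity uniformly forbids the horizontal component of $\tilde X$ from vanishing in the limit.
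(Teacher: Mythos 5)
Your proof is correct and follows exactly the route the paper intends: the corollary is stated with an immediate \qed as a direct application of Proposition \ref{prop:reeb}, and your contribution is to spell out the verification of its hypotheses, in particular the properness of the flow on $\R^2\times S^1$, which you establish correctly by lifting to $\E^3$ and using that the lifted orbits are straight lines along which $\xdlif$ is constant.
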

\begin{rmk} Of course, in the case of $\E^3$, every geodesic vector field is aperiodic; hence, Corollary \ref{cor:aper} implies that every geodesic vector field on $\E^3$ is Reeb.
\end{rmk}
The following two examples are to show that Theorem \ref{thm:main2} is not true in general for non-closed manifolds.
\begin{ex}\label{ex:open}
(i) Let $M$ be equal to $S^1 \times \R^2$ or $T^2 \times \R$ with coordinates $(x,y,z)$ and consider the geodesic vector field
\[
X = \sin \theta(z) \, \partial_x + \cos \theta(z) \, \partial_y,
\]
where $\theta \col \R \to \R$ is a smooth function defined as follows. Set $\theta(0) = \theta(2\pi) = 0$, $\theta(\pi) = - \pi$, and 
\[
\theta(z) \approx \begin{cases} -z, & 0 \leq z \leq \pi,
\\ z-2\pi, & \pi \leq z \leq 2\pi, \end{cases}
\]
where the approximation is $C^0$-close. Then extend $\theta$ to a $2\pi$-periodic function defined on $\R$. Since $\theta(\pi) - \theta(0) = -\pi$, the condition of Proposition \ref{lem:mainLemma} (or Theorem \ref{thm:main2}) is not satisfied. However, $X$ is still conformally Reeb. To see this, consider the 1-form $\beta = F(z) \, \dx + y \sin \theta(z) \, \dz$, where 
\[
F(z) := \int_0^z \cos \theta(t) \, \drm t -1.
\]
Then $\drm \beta = \cos \theta(z) \, \dz \wedge \dx + \sin \theta(z) \, \dy \wedge \dz$ is non-degenerate on the plane field $\eta$ spanned by $\partial_z$ and $\cos \theta(z) \, \partial_x - \sin \theta(z) \, \partial_y$, and $i_X \drm \beta = 0$. Furthermore,
\[
\beta(X) = F(x) \sin \theta(x) \approx \begin{cases} \sin x (1-\sin x) \geq 0, & \text{if }0 \leq x \leq \pi, \\ \sin x (\sin x - 1) \geq 0, & \text{if } \pi < x \leq 2 \pi. \end{cases}
\]
That is, $\beta(X) \geq - \varepsilon$ for some arbitrarily small $\varepsilon > 0$. Now choose $\varepsilon$ so that $1+2 \, \varepsilon \, \theta^\prime > 0$ everywhere, and consider the 1-form
\[
\alpha := \beta + 2 \, \varepsilon \, \alpha_{\theta},
\]
where $\alpha_{\theta} = \sin \theta(z) \, \dx + \cos \theta(z) \, \dy$. Then
\[
\drm \alpha = \underbrace{(1+2 \, \varepsilon \, \theta^\prime)}_{>0} \drm \beta 
\]
is again non-degenerate on $\eta$, and $\alpha(X) = \beta(X) + 2 \, \varepsilon \geq \varepsilon > 0$. Therefore, as $X$ is transverse to $\eta$ and $i_X \drm \alpha = 0$, it follows that $\alpha$ is a contact form with Reeb vector field $R_{\alpha} = (1/\alpha(X)) \, X$.

(ii) Let $M = T^2 \times \R$ with coordinates $(x,y,z)$ and choose a diffeomorphism $\varphi \col \R \xrightarrow{\cong} (-\pi/4, \pi/4)$. Define geodesic vector fields $X$ and $Y$ on $M$ by
\[
X = \partial_y + \partial_z, \quad Y = \sin \varphi(z) \, \partial_x + \cos \varphi(z) \, \partial_y.
\]
Then $Y$ induces a contact structure and $\langle X, Y \rangle = \cos \varphi(z) > 0$. But $X$ is transverse to the 2-torus $\{z = 0\}$, hence $X$ cannot be conformally Reeb.
\end{ex}

\end{document}